\newcommand{\be}{\begin{equation}}
\newcommand{\ee}{\end{equation}}
\newcommand{\ba}{\begin{eqnarray}}
\newcommand{\ea}{\end{eqnarray}}
\newcommand{\bi}{\begin{itemize}}
\newcommand{\ei}{\end{itemize}}
\newcommand{\bn}{\begin{enumerate}}
\newcommand{\en}{\end{enumerate}}
\newcommand{\bp}{\begin{proof}}
\newcommand{\ep}{\end{proof}}
\newcommand{\wt}{\ensuremath{\widetilde}}
\newcommand{\mr}{\ensuremath{\mathrm}}
\newcommand{\mc}{\ensuremath{\mathcal}}
\newcommand{\mf}{\ensuremath{\mathfrak}}
\newcommand{\ov}{\ensuremath{\overline}}
\newcommand{\sm}{\ensuremath{\setminus}}
\newcommand{\intfty}{\ensuremath{\int _{-\infty} ^{\infty}} }
\newcommand{\Om}{\ensuremath{\Omega}}
\renewcommand{\bm}{\ensuremath{\mathbb }}
\newcommand{\vnm}{\mr{vN} (M) }
\newcommand{\id}{\mr{id}  }
\newcommand{\ip}[2]{\ensuremath{\langle {#1} , {#2} \rangle}}
\newcommand{\dom}[1]{\ensuremath{\mathrm{Dom} ({#1}) }}
\newcommand{\ran}[1]{\ensuremath{\mathrm{Ran} ({#1}) }}
\renewcommand{\ker}[1]{\ensuremath{\mathrm{Ker} ({#1}) }}
\newcommand{\im}[1]{\ensuremath{\mathrm{Im} \left( {#1} \right) }}
\newcommand{\re}[1]{\ensuremath{\mathrm{Re} \left( {#1} \right) }}
\newcommand{\ad}[1]{\ensuremath{\mathrm{Ad} _{#1} }}
\newcommand{\symr}[1]{\ensuremath{\mc{S}\mr{ym} ^R _1 ({#1}) }}
\newcommand{\sym}[1]{\ensuremath{\mc{S}\mr{ym} _1 ({#1}) }}
\numberwithin{equation}{section}
\newtheorem{thm}[subsection]{Theorem}
\newtheorem{lemming}[subsection]{Lemma}
\newtheorem{prop}[subsection]{Proposition}
\newtheorem{cor}[subsection]{Corollary}
\begin{document}

\title{Near invariance and symmetric operators}

\author{R.T.W. Martin}

\address{Department of Mathematics and Applied Mathematics \\ University of Cape Town\\
Cape Town, South Africa \\
phone: +27 21 650 5734 \\ fax: +27 21 650 2334}

\email{rtwmartin@gmail.com}

\begin{abstract}

    Let $S$ be a subspace of $L^2 (\bm{R} )$. We show that the operator $M$ of multiplication by
the independent variable has a simple symmetric regular restriction to $S$ with deficiency indices $(1,1)$ if and only if $S = u h K^2 _\theta$ is a nearly invariant subspace, with $\theta$ a meromorphic inner function vanishing at $i$. Here $u$ is unimodular, $h$ is an isometric multiplier of $K^2 _\theta$ into $H^2$ and $H^2$ is the Hardy space of the upper half plane. Our proof uses the dilation theory of completely positive maps.

\vspace{5mm}   \noindent {\it Key words and phrases}: symmetric operators,  Hardy spaces, model subspaces, nearly invariant.

\vspace{3mm}
\noindent {\it 2010 Mathematics Subject Classification} ---30H10 Hardy spaces, 46E22 Hilbert spaces with reproducing kernels, 47B25 symmetric and self-adjoint
operators (unbounded) 47B32 Operators in reproducing kernel Hilbert spaces

\end{abstract}

\maketitle

\section{Introduction}

A closed subspace $S \subset H^2 (\bm{U} )$, where $\bm{U}$ denotes the upper half plane is called nearly invariant \cite[Section 12]{Ross}, \cite{Sarason-near,Hitt} if the following condition holds:
\be f \in S \ \mr{and} \ f(i) =0 \ \ \Rightarrow \ \ \frac{f(z)}{z-i}  \in S .\ee  In other words the backwards shift (the adjoint of the restriction of multiplication by $\frac{z-i}{z+i}$ to $H^2$) maps the subspace $S' := \{ f \in S | \ \ f(i) = 0 \} \subset S$ into $S$. Any model subspace $K^2 _\theta$ is nearly invariant since it is by definition invariant for the backwards shift. Any nearly invariant
subspace of $H^2 (\bm{U} )$ can be written as $S = h K^2 _\theta $ where $\theta $ is inner, $\theta (i) =0$, and $h$ is a certain function such that $\frac{h(z)}{z+i} \in S$. A subspace $S \subset L^2 (\bm{R})$ is said to be
nearly invariant if $S = u S'$ where $u$ is a unimodular function and $S' \subset H^2$ is nearly invariant.

If $\theta$ is meromorphic, it is not difficult to show that any nearly invariant subspace $S = uh K^2 _\theta  \subset L^2 (\bm{R})$ is a reproducing kernel Hilbert space (RKHS) of
functions on $\bm{R}$ with a $\bm{T}$-parameter family of total orthogonal sets of point evaluation vectors.  This follows, for example, from the results of \cite{Martin-dB, Martin-symsamp} (these results show that any $K^2 _\theta$ has these properties for meromorphic inner $\theta$). It also follows that there is a linear manifold (non-closed subspace) $\dom{M_S} \subset S$ such that $M_S := M | _{\dom{M_S}}$ is a closed, regular and simple symmetric linear transformation with deficiency indices $(1,1)$.  Note that $M_S$ may not be densely defined, but the co-dimensions of its domain and range are at most $1$. We will denote the family of all such linear transformations on a Hilbert space $\mc{H}$ by $\symr{\mc{H}}$ for brevity. Here the $R$ stands for regular. Similarly let $\sym{S}$ denote the family of all simple symmetric linear transformations with deficiency indices $(1,1)$ that are defined in $S$.

The goal of this paper is to show that the two conditions: (i) $S$ is nearly invariant with $S= u h K^2 _\theta$ for meromorphic $\theta$ with $\theta (i) =0$ and (ii) $M$ has a symmetric restriction $M_S \in \symr{S}$, are in fact equivalent.  This will show in particular that the latter condition implies that $S$ is a RKHS with a $\bm{T}$-parameter family of total orthogonal sets of point evaluation vectors. One direction of $\mr{(i)} \  \Leftrightarrow \ \mr{(ii)}$ follows from known results - it is easy to show that if $S$ is nearly invariant, that $M$ has a symmetric restriction $M_S \in \sym{S}$ (in the next subsection we will show this follows from \emph{e.g.} \cite{Martin-dB}).  Proving the converse appears to be more difficult, and the goal of this paper is to accomplish this for the special case where $\theta$ has a meromorphic extension to $\bm{C}$.  In fact we expect that the more general result holds for arbitrary inner $\theta$.  That is, we conjecture that $S$ is nearly invariant if
and only if the multiplication operator $M$ has a simple symmetric restriction $M_S$ to a linear manifold in $S$ such that the Lifschitz characteristic function \cite{Lifschitz} of $M_S$ is inner (see also \cite[Appendix 1, Section 5]{Glazman}). Our approach to proving this result, however, would require the extension of several results in Krein's representation theory of simple symmetric operators to the non-regular case \cite{Krein}.  We will discuss this in more detail in the final section.

Given any symmetric operator $T \in \symr{\mc{H}}$ the results of \cite{Silva-entire,Silva-nondense} essentially show how to construct an isometry $V: \mc{H} \rightarrow L^2 (\bm{R})$ such that $\ran{V} = u K^2 _\theta$ for a meromorphic inner $\theta$ and $VTV^* = M_\theta$ acts as multiplication by the independent variable on its domain. They accomplish this by modifying and extending Krein's original representation theory for regular symmetric operators as presented in \cite{Krein}. Using this result, the theory of \cite{Krein}, and some dilation theory (Stinespring's dilation theorem for completely positive maps) we show that if $M$ has a symmetric restriction belonging to $\symr{S}$ where $S \subset L^2 (\bm{R})$, that $S=uh K^2 _\theta$ must be nearly invariant with meromorphic inner $\theta$ such that $\theta (i) =0$.  This provides another connection between the classical theory of representations of symmetric operators as originated by Krein and the theory of model subspaces of Hardy space.

\subsection{Nearly invariant subspaces of $H^2 (\bm{U} )$.}

Although it will be most convenient to work with the upper half-plane, nearly invariant subspaces of $H^2 (\bm{D})$ have a more elegant description. A subspace $S \subset H^2 (\bm{D})$ is called nearly invariant if  the following condition holds:
\be f \in S \ \ \mr{and} \ f(0) = 0  \ \ \Rightarrow \ f(z) /z  \in S.\ee  If a subspace $S \subset H^2 (\bm{D} )$ is nearly invariant then $S = h K^2 _\theta$ where $\theta$ is inner with $\varphi(0) = 0$, multiplication
by $h \in S$ is an isometry of $K^2 _\theta$ onto $S$, and $h$ is the unique solution to the extremal problem \cite{Hitt}:
\be \sup \{ \re{h(0)} | \ h \in S \ \mr{and} \ \| h \| =1  \} .\ee

Note that $h \in H^2$ since $\varphi(0) = 0$ implies that $k_0 ^\varphi(z) =1 \in K^2 _\theta$ is the point evaluation vector at $0$.
Conversely if $h$ is any isometric multiplier of $K^2 _\varphi$ into $H^2$ where $\varphi(0) =0$, then $S = h K^2 _\theta$ is nearly invariant
with extremal function $h$, and $h$ must have the form \cite{Sarason}:
\be h = \frac{a}{1-b \varphi} ,\ee where $a, b$ belong to the unit ball of $H^\infty$ and obey $|a| ^2 + |b | ^2 =1  \ \mr{a.e.}$ on the unit circle $\bm{T}$.

Nearly invariant subspaces of $H^2 (\bm{U})$ have a similar description as follows.
Let $\mu (z) := \frac{z-i}{z+i}$, $\mu : \ov{\bm{U} } \rightarrow \ov{ \bm{D} } \setminus \{ 1 \}$, which has compositional inverse $\mu ^{-1} (z) = i \frac{1+z}{1-z}$. Then $\mc{U} : H^2 (\bm{D} ) \rightarrow H^2 (\bm{U} )$
defined by \be \mc{U} f (z) := \frac{1 -\mu (z)}{\sqrt{\pi}} (f \circ \mu) (z) ,\ee

is a unitary transformation which maps $K^2 _\varphi \subset H^2 (\bm{D})$ onto $K^2 _{\varphi \circ \mu } \subset H^2 (\bm{U})$. If $S \subset H^2 (\bm{U} )$ is nearly invariant,
it follows that $S' := \mc{U} ^* S $ is nearly invariant and hence $S' = h K^2 _\varphi$ for some inner $\varphi \in H^\infty (\bm{D})$ such that $\varphi (0) =0$ and $h \in H^2 (\bm{D})$. It follows that $S = \mc{U} S' = (h \circ \mu ) K^2 _{\varphi \circ \mu } $ where $\mc{U} h =  \pi ^{-1/2} (1 -\mu  ) h \circ \mu \in S$, so that $\frac{h\circ \mu}{z+i} \in S \subset H^2 (\bm{U}) $. This shows that if $h'$ is any isometric multiplier of $K^2 _\theta$ into $H^2 (\bm{U} )$ (where $\theta (i) =0$), that $\frac{h'}{z+i} \in H^2$.

Given any inner function $\theta \in H^\infty (\bm{U} )$, it is well known that $M$ has a restriction $M_\theta \in \sym{K^2 _\theta}$ (see \emph{e.g} \cite{Martin-dB, Martin-symsamp}).  Suppose $S := h K^2 _\theta $ is nearly invariant ($\theta (i) =0$) and $h$ is an isometric multiplier of $K^2 _\theta$. Since $V:=$multiplication by $h$ commutes with $M$ and is an isometry of $K^2 _\theta $ onto $S$, it is not hard to see that $M_S =P_S V M _\theta V^* P_S$
is a symmetric restriction of $M$ to $S$ with domain $\dom{M _S} = V \dom{M _\theta}$.  Moreover, since $V \ran{M _\theta \pm _i } = \ran{M_S \pm i } $, it follows that $M_S \in \sym{S}$, and that the Lisvic characteristic function of $M_S$ is $\theta$ (recall here that $\theta (i) =0$).  This shows that any nearly invariant subspace has the property that $M$ has a restriction $M_S \in \sym{S}$.  The main goal of this paper is to show the converse (in the special case where $\theta$ is meromorphic),
namely that if $S \subset L^2 (\bm{R} )$ is such that $M_S \in \symr{S}$, that $S = uh K^2 _\theta$ is nearly invariant.

\section{Representation theory for symmetric operators}
\label{section:reptheory}

Let $\mc{H}$ be a separable Hilbert space and let $\sym{\mc{H}}$ denote the family of all closed simple symmetric linear transformations in $\mc{H}$ with deficiency indices $(1,1)$. By a linear transformation we mean a linear
map which is not necessarily densely defined, we reserve the term operator for a densely defined linear map. Even though it may not be densely defined, if $T \in \sym{\mc{H}}$, the co-dimensions of its domain and of its range are both equal to $n$ where $n$ is either $0$ or $1$. Notice that $\sym{\mc{H}} \supset \symr{\mc{H}}$.

Choose $\psi (i) \in \ran{T+i} ^{\perp}$ ($= \ker{T^* -i}$ in the case where $T$ is densely defined), and define the vector-valued function
\be \psi (z) := ( T' -i ) ( T' -z) ^{-1} \psi (i) = \psi (i) + (z-i) (T' -z) ^{-1} \psi (i) ,\ee
where $T'$ is any densely defined self-adjoint extension of $T$ within $\mc{H}$. If $T$ is regular then $T'$ has purely point spectrum consisting of eigenvalues of multiplicity one with no finite accumulation point, and it follows that $\psi (z)$ is meromorphic in $\bm{C}$, with simple poles at each point in $\sigma (T') \subset \bm{R}$. Also it can be shown that $0 \neq \psi (z) \in \ran{T-\ov{z} } ^\perp $ for all $z \in \bm{C} \sm \bm{R}$,
see \emph{e.g.} \cite[Section 1.2, pgs 8-9]{Krein}.

Choose $0 \neq u \in \ran{T + i} ^{\perp}$. One can establish the following:

\begin{lemming}
If $T \in \symr{\mc{H}}$ and $z \in \ov{\bm{U}}$, then for any non-zero $\psi _z \in \ran{T - \ov{z}} ^{\perp}$, $\ip{\psi _i }{ \psi _z} \neq 0$ \\
(so that $\ip{u}{\psi _z} \neq 0$).

\label{lemming:nopole}
\end{lemming}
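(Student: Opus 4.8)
The plan is to reduce the statement to the non-vanishing of a single scalar function and then to settle that by elementary Möbius geometry together with the spectral theorem, handling the real points of the spectrum separately. Since $T$ has deficiency indices $(1,1)$, the space $\ran{T+i}^\perp$ is one-dimensional, so $u = c\,\psi (i)$ for some $c \neq 0$; likewise, for $z$ in the open upper half plane $\ran{T-\ov{z}}^\perp$ is one-dimensional and is spanned by $\psi (z)$, so every non-zero $\psi _z$ is a non-zero multiple of $\psi (z)$. Hence $\ip{u}{\psi _z}$ is a non-zero multiple of
\[ \Phi (z) := \ip{\psi (i)}{\psi (z)} = \ip{\psi (i)}{(T'-i)(T'-z)^{-1} \psi (i)} , \]
and it suffices to prove $\Phi (z) \neq 0$ for every $z \in \ov{\bm{U}}$. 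Writing $a_n := \ip{e_n}{\psi (i)}$ for the spectral coefficients of $\psi (i)$ against the eigenbasis $\{ e_n \}$ of $T'$ (which, by regularity, consists of simple eigenvalues $\{ \lambda _n \} = \sigma (T')$ with no finite accumulation point), the spectral theorem gives the integral representation
\[ \Phi (z) = \int _{\bm{R}} \frac{\lambda - i}{\lambda - z} \, d\mu (\lambda) , \qquad d\mu = \sum _n |a_n| ^2 \delta _{\lambda _n} , \]
a positive measure of total mass $\| \psi (i) \| ^2 > 0$.

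Next I would analyze the Möbius map $f_z (\lambda) := \frac{\lambda - i}{\lambda - z}$ for fixed $z \in \ov{\bm{U}}$ with $z \notin \sigma (T')$. It carries $\bm{R} \cup \{ \infty \}$ onto a circle or line $C_z$ and maps the upper half plane onto the complementary component $\Omega _z$ containing $f_z (i) = 0$. The location of the pole $\lambda = z$ controls everything: when $z \in \bm{U}$ we have $f_z (z) = \infty \in \Omega _z$, so $\Omega _z$ is the unbounded component, $C_z$ is a genuine circle, and $0$ is strictly exterior to the closed disk it bounds; when $z \in \bm{R}$, $C_z$ is a line and $\Omega _z$ is the open half plane on one side of it, again with $0 \in \Omega _z$ off $C_z$. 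In both cases $0 \notin \ov{\mr{conv}} (C_z)$. Since each atom $\lambda _n$ is real, every $f_z (\lambda _n)$ lies on $C_z$, so the normalized integral $\Phi (z) / \mu (\bm{R})$ is a limit of convex combinations of the $f_z (\lambda _n)$ and therefore lies in $\ov{\mr{conv}} (C_z)$, which omits $0$. This yields $\Phi (z) \neq 0$ for all $z \in \ov{\bm{U}} \setminus \sigma (T')$; absolute convergence of the integral is clear because $\sigma (T')$ has no finite accumulation point and stays a positive distance from $z$.

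It remains to treat a real point $z = \lambda _n \in \sigma (T')$, where $\psi (z)$ acquires a pole and the integral diverges. There $\ran{T - z}^\perp$ is spanned by the eigenvector $e_n$, so any non-zero $\psi _z$ is a multiple of $e_n$ and $\ip{\psi (i)}{\psi _z}$ is a non-zero multiple of $\ip{\psi (i)}{e_n} = \ov{a_n}$. The essential point — and the step I expect to be the genuine obstacle — is that $a_n \neq 0$, and this is exactly where simplicity of $T$ is used: for $T \in \symr{\mc{H}}$, simplicity is equivalent to $\psi (i)$ being a cyclic vector for the self-adjoint extension $T'$, and cyclicity for an operator of simple (multiplicity-one) spectrum forces every coefficient $a_n = \ip{e_n}{\psi (i)}$ to be non-zero (were some $a_n = 0$, the eigenline $\bm{C} e_n$ would reduce $T$ and carry a self-adjoint part, contradicting simplicity). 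This disposes of the boundary eigenvalues and completes the argument.

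The only remaining routine points are the one-dimensionality of the relevant defect spaces and the closed-convex-hull containment of the normalized integral, both of which are standard; the substantive content is the Möbius/convexity dichotomy above and the appeal to cyclicity at the real spectrum.
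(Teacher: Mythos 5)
Your argument is essentially correct, but it takes a genuinely different route from the paper's. The paper's proof is uniform in $z$ and purely operator-theoretic: it uses the dissipative extension $T_z$ constructed just before the lemma, for which $\psi _z$ is an eigenvector with eigenvalue $z$, and observes that $\ip{u}{\psi _z} = 0$ would put $\psi _z$ into $\ran{T+i}$, say $\psi _z = (T+i)\phi$; since $(T_z + i)^{-1}$ exists and sends $\psi _z$ to $(z+i)^{-1}\psi _z$ while also sending $(T_z+i)\phi$ back to $\phi$, one gets $\psi _z \in \dom{T}$ and $T\psi _z = z \psi _z$, contradicting symmetry (non-real $z$) or simplicity (real $z$). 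You instead reduce to the scalar function $\Phi (z) = \ip{\psi (i)}{\psi (z)}$, represent it as an integral of the M\"obius map $f_z$ against the spectral measure of $\psi (i)$ for $T'$, and exclude $0$ by a convexity/separation argument, handling $\sigma (T')$ by cyclicity. The paper's argument buys brevity and independence from the spectral decomposition of $T'$ (it works verbatim at any regular point $z \in \Om$, even when $T$ is not regular); yours buys an explicit geometric picture and, away from $\sigma (T')$, a quantitative lower bound $|\Phi (z)| \geq \mathrm{dist}\bigl( 0 , \ov{\mr{conv}} (C_z) \bigr) \, \mu (\bm{R})$.

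Two points need tightening. First, you justify the reduction to $\Phi (z)$ only for $z$ in the open upper half plane, but you then apply the case-two argument to real $z \notin \sigma (T')$; for those $z$ you must also know that $\psi (z)$ is a non-zero element of $\ran{T - z} ^\perp$, so that it spans that one-dimensional space. This is true (the same resolvent computation works, using that $(T' - z)^{-1}$ is bounded and self-adjoint for real $z \notin \sigma (T')$, and $\psi (z) = 0$ would force $T' \psi (i) = i \psi (i)$), but it is not covered by your list of ``routine points.'' Also $f_i \equiv 1$ is not a M\"obius map, though $\Phi (i) = \| \psi (i) \| ^2 > 0$ trivially. Second, the parenthetical justifying $a_n \neq 0$ skips the essential step: to see that $\bm{C} e_n$ reduces $T$ you must first show $e_n \in \dom{T}$, which follows because $a_n = 0$ gives $e_n = (T+i)\phi \in \ran{T+i}$, and applying $(T'+i)^{-1}$ yields $\phi = (\lambda _n + i)^{-1} e_n$, hence $e_n \in \dom{T}$ and $T e_n = \lambda _n e_n$ --- this is precisely the paper's argument specialized to $z = \lambda _n$ and $T_z = T'$. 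With these repairs the proof is complete.
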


The above lemma is a consequence of the following considerations:

Recall that $w \in \bm{C}$ is called a regular point of $T$ if $T-w $ is bounded below. Let $\Om $ denote the intersection of $\bm{U}$ with the set of all
regular points of $T$. Then $\bm{U} \subset \Om \subset \ov{\bm{U}}$ and $\Om = \ov{\bm{U}}$ if and only if $T$ is regular, \emph{i.e.} if and only if
$T \in \symr{\mc{H}}$.

Now for any $w \in \Om$, $\ran{T-\ov{w}} ^\perp = \bm{C} \{ \phi _w \}$ is one dimensional, spanned by a fixed non-zero vector $\phi _w$. For each $w \in \Om$,
let $\mf{D} _w := \dom{T} + \bm{C} \{ \phi _w \}$, and define the linear transformation $T_w$ with domain $\mf{D} _w$ by
\be T_w ( \phi + c \phi _w ) = T\phi + w c \phi _w , \ee for any $\phi \in \dom{T}$ and $c \in \bm{C}$. It is not difficult to verify that $T_w$ is a well-defined
and closed linear extension of $T$. Clearly $T_w$ is densely defined if $T$ is, in which case $T\subset T_w \subset T^*$. A quick calculation verifies that
$i T_w $ is dissipative, \emph{i.e.}  $\im{\ip{T_w \phi}{\phi} } \geq 0$ for all $\phi \in \mf{D} _w$. It follows from this that $T_w -z $ is bounded below for
all $z \in \bm{L}$, so that one can define $(T_w -z) ^{-1}$ as a linear transformation from $\ran{T_w -z}$ onto $\dom{T_w} = \mf{D} _w$. Observe that
$\phi _w$ is an eigenvector of $T_w$ to eigenvalue $w$ by construction.

\subsubsection{Remark} More can be said about the extensions $T_w$. Since we will not have need of these facts, we will state them here without proof.
If $T$ is not densely defined, then one can show that there is exactly one proper closed linear extension $T'$ of $T$ which is not densely defined, and this extension must
be self-adjoint. The transformations $T_w$ are self-adjoint if and only if $w \in \bm{R}$. (If $T_x$ is the self-adjoint extension of $T$ which
is not densely defined, it is self-adjoint in the sense of a linear relation, \emph{i.e.} its graph is self-adjoint as a subspace of $\mc{H} \oplus \mc{H}$
\cite{Snoo}). One can show that if $T_w$ is densely defined that $\sigma (T_w ) \subset \ov{\bm{U}}$. Since $iT_w$ is dissipative, it follows that
the Cayley transform $\mu(T_w)$ is a contractive linear operator which extends the isometric linear transformation $\mu (T)$. One can further show
that $w \in \Om$ is an eigenvalue of multiplicity one for $T_w$, and that $w \in \Om$ is an eigenvalue for both $T_w$ and $T_z$ if and only if $T_w = T_z$.

\begin{proof}{ (of Lemma \ref{lemming:nopole})}
    Choose $w =i \in \bm{U}$, and recall that $u \in \ran{T +i} ^\perp$. Suppose that $z \in \Om$. Then there is an extension $T_z$ of $T$ for which $\psi _z$ is an
eigenvector with eigenvalue $z$ (as described above).

If it were true that $\ip{u}{\psi _z} =0$ then we would have that $\psi _z \in \ran{T +i }$ so that $\psi _z = (T +i ) \phi$ for some $\phi \in \dom{T}$.
But then since $T_z -w $ is bounded below for all $w \in \bm{L}$ it would follow that $(z +i ) ^{-1} \psi _z = (T_z +i ) ^{-1} \psi _z = \phi   $ so that $\psi _z \in \dom{T}$.
This contradicts the fact that $T$ is simple (it also contradicts the fact that $T$ is symmetric if $z \notin \bm{R}$).
\end{proof}

It follows that the function $\ip{u}{\psi (\ov{z})}$ is meromorphic on $\bm{C}$ with zeroes contained strictly in the lower half-plane.

Now we can define the vector-valued function $\delta (z) := \frac{\psi (z)}{ \ip{\psi(z)}{u}}$.  By the previous lemma, this is meromorphic in $\bm{C}$ with
poles contained in the lower half-plane (the poles of $\psi (z)$ on $\bm{R}$ cancel out with those of $\ip{\psi(z)}{u}$, see \emph{e.g.} \cite{Silva}).

Hence one can define a linear map $V$ of $\mc{H}$ into a vector space of functions analytic on an open neighbourhood of the closed upper half-plane
by $(Vf) (z) := \ip{f}{\delta (\ov{z})} =: \hat{f} (z)$ for any $f \in \mc{H}$.  We can endow the range of $V$, $V\mc{H} =: \hat{\mc{H}}$
with an inner product which makes it a Hilbert space (and $V : \mc{H} \rightarrow \hat{\mc{H}}$ an isometry) as follows.

Let $Q$ denote any unital $B(\mc{H})$-valued POVM (Positive Operator Valued Measure) which diagonalizes $T$. In this case $Q(\Om ) = P \chi _{\Om} (S) P $ where $S$ is a self-adjoint extension
of $T$ (to perhaps a larger Hilbert space $\mc{K} \supset \mc{H}$), and $P : \mc{K} \rightarrow \mc{H}$ is orthogonal projection. Here we assume that $Q(\bm{R}) = \bm{1}$ so that
$S$ is a densely defined linear operator in $\mc{K}$ (this is always the case if $T$ is densely defined). Also here,
$\Om \in \mr{Bor} (\bm{R} ) :=$ the Borel sigma algebra of subsets of $\bm{R}$. The Borel measure defined by $\sigma (\Om ) := \ip{Q (\Om) u}{u} = \ip{ \chi _\Om (S) u}{u}$ is called a
$u$-spectral measure for $T$, and we have the following theorem \cite[Theorem 2.1.2, pg. 51]{Krein}:

\begin{thm}{(Krein)}
The map $Vf = \hat{f}$ is an isometric map of $\mc{H}$ into $L^2 (\bm{R} , d \sigma )$. It is onto if and only if $Q$ is a projection-valued measure (PVM).
\label{thm:transform}
\end{thm}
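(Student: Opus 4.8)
The plan is to realize the $u$-spectral measure $\sigma$ as the scalar spectral measure of a cyclic vector for the minimal self-adjoint dilation of the POVM $Q$, and then to identify $V$ with the restriction to $\mc{H}$ of the resulting scalar spectral representation. Concretely, let $S$ on $\mc{K}\supseteq\mc{H}$ be the minimal self-adjoint dilation underlying $Q$, so that $Q(\Om)=P\chi_\Om(S)P$ and $\mc{K}=\ov{\mr{span}}\{(S-\zeta)^{-1}u:\zeta\in\bm{C}\sm\bm{R}\}$; minimality together with the one-dimensional defect and simplicity of $T$ makes $u$ cyclic for $S$, and the scalar spectral theorem then furnishes a \emph{unitary} $W:\mc{K}\to L^2(\bm{R},d\sigma)$ with $Wu=1$ and $Wg(S)u=g$, so that $\ip{F}{G}_{\mc{K}}=\int (WF)\ov{(WG)}\,d\sigma$ for all $F,G\in\mc{K}$. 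Since $\mc{H}\subseteq\mc{K}$, the map $W|_{\mc{H}}$ is automatically isometric into $L^2(d\sigma)$ with closed range, so the whole theorem reduces to two assertions: (a) $V=W|_{\mc{H}}$ as maps into $L^2(d\sigma)$, and (b) $W|_{\mc{H}}$ is onto if and only if $Q$ is a PVM.

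First I would normalise $\psi(i)=u$ and record the intrinsic description of $\delta$: for $\zeta\in\bm{C}\sm\bm{R}$, $\delta(\bar\zeta)$ is the \emph{unique} vector of $\ran{T-\zeta}^{\perp}=\ker{T^*-\bar\zeta}$ normalised by $\ip{\delta(\bar\zeta)}{u}=1$, and the poles of $\psi$ on $\bm{R}$ cancel against the zeros of $\ip{\psi(\cdot)}{u}$, so $\delta$ extends across $\bm{R}$ and $\delta(\bar\lambda)$ is defined $\sigma$-a.e. A direct computation gives $Vu=1$ (numerator and denominator coincide), and, using $T^*\delta(\bar z)=\bar z\,\delta(\bar z)$, that $V$ intertwines $T$ with multiplication by the independent variable, $V(Tf)(z)=z\,(Vf)(z)$; this matches the intertwining enjoyed by $W$ and makes the identification $V=W|_{\mc{H}}$ plausible.

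The crux, and the step I expect to be the main obstacle, is (a): showing $(Vf)(\lambda)=(Wf)(\lambda)$ for $\sigma$-a.e.\ $\lambda$. The difficulty is that $Vf$ is built from a self-adjoint extension $T'$ living \emph{inside} $\mc{H}$, whereas $Wf$ is built from the dilation $S$ on the larger space $\mc{K}$, so the two are genuinely different analytic functions off $\mr{supp}\,\sigma$ and one can only hope for agreement $\sigma$-a.e. I would prove this by comparing Borel (Cauchy) transforms: for $w\in\bm{C}\sm\bm{R}$ one has $\int\frac{(Wf)(\lambda)}{\lambda-w}\,d\sigma(\lambda)=\ip{f}{(S-\bar w)^{-1}u}_{\mc{K}}=\ip{f}{P(S-\bar w)^{-1}u}_{\mc{H}}$, so the task is to establish the companion vector identity
\be \int_{\bm{R}}\frac{\delta(\bar\lambda)}{\lambda-\zeta}\,d\sigma(\lambda)=P(S-\zeta)^{-1}u \qquad (\zeta\in\bm{C}\sm\bm{R}), \ee
the eigenfunction expansion of the compressed (generalised) resolvent of $T$. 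This is where Krein's resolvent formula and the pole-cancellation enter: using the scalar Herglotz identity $\ip{(S-\zeta)^{-1}u}{u}=\int(\lambda-\zeta)^{-1}\,d\sigma(\lambda)$, writing $P(S-\zeta)^{-1}|_{\mc{H}}$ in terms of $(T'-\zeta)^{-1}$ and the deficiency vector $\psi(\zeta)$, and substituting $\delta=\psi/\ip{\psi}{u}$, both sides are seen to agree. Granting this, $\int\frac{Vf}{\lambda-w}\,d\sigma=\int\frac{Wf}{\lambda-w}\,d\sigma$ for all $w$, and injectivity of the Cauchy transform on $L^2(d\sigma)$ forces $Vf=Wf$; equivalently one obtains the resolution of identity $\int\ip{f}{\delta(\bar\lambda)}\ip{\delta(\bar\lambda)}{g}\,d\sigma=\ip{f}{g}$, which \emph{is} the isometry.

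Finally, isometry follows at once from $\|f\|^2_{\mc{H}}=\|f\|^2_{\mc{K}}=\int|Wf|^2\,d\sigma=\int|Vf|^2\,d\sigma$. For (b), since $V=W|_{\mc{H}}$ with $W$ unitary, $\ran{V}=W\mc{H}$ is a closed subspace of $L^2(d\sigma)=W\mc{K}$, so $V$ is onto iff $\mc{H}=\mc{K}$. As $S$ is the \emph{minimal} dilation, $\mc{H}=\mc{K}$ holds iff $\mc{H}$ reduces $S$, i.e.\ iff $\chi_\Om(S)$ commutes with $P$ for every $\Om$, i.e.\ iff $Q(\Om)=P\chi_\Om(S)P$ is idempotent for every $\Om$ — that is, iff $Q$ is a PVM. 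The cyclicity of $u$ used for $W$, and the totality of $\{\delta(\bar w):w\in\bm{C}\sm\bm{R}\}$ in $\mc{H}$ (which also yields injectivity of $V$), are both consequences of the simplicity of $T$, which says precisely that $\ov{\mr{span}}\{\ker{T^*-w}:w\in\bm{C}\sm\bm{R}\}=\mc{H}$.
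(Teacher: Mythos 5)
The paper does not prove this theorem: it is imported from the cited source (Krein's Theorem 2.1.2), so your argument can only be compared with Krein's own, which proceeds quite differently --- he establishes the Parseval identity $\ip{f}{g}=\int\hat{f}\ov{\hat{g}}\,d\sigma$ directly on the total family $\{\psi(w)\}$ by reducing everything to scalar Herglotz identities for $\ip{(S-\zeta)^{-1}u}{u}$ and $\ip{(T'-\zeta)^{-1}u}{u}$, never leaving $\mc{H}$. Your Naimark--dilation route is attractive, but it has a load-bearing gap at the assertion that ``minimality together with the one-dimensional defect and simplicity of $T$ makes $u$ cyclic for $S$.'' Minimality of the dilation means $\mc{K}=\ov{\mr{span}}\{\chi_{\Om}(S)h:\ h\in\mc{H}\}$, not $\mc{K}=\ov{\mr{span}}\{(S-\zeta)^{-1}u\}$; you have silently identified the two. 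Writing $\mc{K}_u$ for the cyclic subspace generated by $u$, what simplicity gives cheaply is only $\mc{H}\cap\mc{K}_u^{\perp}=\{0\}$: from $P(S-\zeta)^{-1}u=(T'-\zeta)^{-1}u+c(\zeta)\psi(\zeta)$ and $(T'-\zeta)^{-1}u=(\zeta-i)^{-1}(\psi(\zeta)-u)$ one sees each $\psi(\zeta)$ lies in $\ov{P\mc{K}_u}$, whence $\ov{P\mc{K}_u}=\mc{H}$ by totality of the $\psi(\zeta)$. That is strictly weaker than the inclusion $\mc{H}\subseteq\mc{K}_u$ your proof needs: $\mc{K}_u^{\perp}$ is a reducing subspace that is a priori merely transversal to $\mc{H}$, not orthogonal to it, and minimality only excludes reducing subspaces of $\mc{K}\ominus\mc{H}$.

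The gap is not cosmetic, because your own mechanism for identifying $V$ with the spectral representation --- the Cauchy-transform identity $\int(\lambda-w)^{-1}\hat{f}\,d\sigma=\ip{f}{(S-\ov{w})^{-1}u}$ --- actually proves $\hat{f}=W(P_{\mc{K}_u}f)$, since $W$ is unitary only from $\mc{K}_u$ onto $L^2(d\sigma)$. This yields $\int|\hat{f}|^2\,d\sigma=\|P_{\mc{K}_u}f\|^2\leq\|f\|^2$, i.e.\ $V$ is a contraction, with isometry holding for exactly those $f$ lying in $\mc{K}_u$. So the isometry you propose to read off ``at once'' is logically equivalent to the cyclicity you assumed; the difficulty has been relocated, not resolved. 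The cyclicity is true, but closing it requires a genuine argument (for instance, showing that for $g\in\mc{K}_u^{\perp}$ the analytic function $\zeta\mapsto\ip{Pg}{\psi(\ov{\zeta})}$ vanishes identically, which must contend with the component of $(S-\zeta)^{-1}u$ in $\mc{K}\ominus\mc{H}$, or an appeal to the Krein--Naimark theory of generalized resolvents). This is presumably why Krein's proof works on the total set $\{\psi(w)\}$ with purely scalar identities and never introduces the exit space. The remaining steps of your outline --- the intertwining $\widehat{Tf}(z)=z\hat{f}(z)$, Stieltjes inversion for injectivity of the Cauchy transform of the finite measure $\hat{f}\,d\sigma$, and the chain ($V$ onto) $\Leftrightarrow$ ($\mc{H}=\mc{K}_u$) $\Leftrightarrow$ ($\mc{H}$ reduces $S$) $\Leftrightarrow$ ($Q$ is a PVM) --- are sound once the inclusion $\mc{H}\subseteq\mc{K}_u$ is in hand.
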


It is not hard to check that $V T V^* = \hat{T}$ acts as multiplication by the independent variable in $\hat{\mc{H}}$.

Silva and Toloza modify this construction slightly as follows \cite{Silva-entire}. Let $h(z)$ be any entire function whose zero set is equal to $\sigma (T')$ (such an entire function
always exists, since the spectrum of $\sigma (T')$ is a discrete set of real eigenvalues of multiplicity one with no finite accumulation point).  Then
define $\gamma (z) := h(z) \psi (z)$. Then they define the linear map $\wt{V}f (z) := \wt{f} (z) := \ip{f}{\gamma (\ov{z} )}$, which maps elements of $\mc{H}$
into a vector space $\wt{\mc{H}}$ of entire functions.  If one endows $\wt{\mc{H}}$ with
the inner product $\ip{\wt{f}}{\wt{g}}_{\wt{\mc{H}}} = \ip{f}{g}$, then $\wt{\mc{H}}$ is a Hilbert space, $\wt{V}$ is an isometry, and one can further verify
that $\wt{H}$ is actually an axiomatic de Branges space of entire functions.
It follows from results of de Branges that there is an entire de Branges function $E$ (which we can assume has no real zeroes by de Branges \cite[Problem 44, pg. 52]{dB}) such that $\wt{\mc{H}}$ with the
inner product $\ip{\wt{f}}{\wt{g}} _E := \intfty \wt{f} (x) \ov{\wt{g} (x)} \frac{1}{|E(x)|^{-2}} dx$ is a de Branges space of entire functions and $\ip{\wt{f}}{\wt{g}} _E =
\ip{\wt{f}}{\wt{g}} _{\wt{\mc{H}}}$ for all $\wt{f}, \wt{g} \in \wt{\mc{H}} =: \mc{H} (E)$.

Now let $r(z) := h(z) \hat{u} (z) = h(z) \ip{u}{\psi (\ov{z})}$. By Lemma \ref{lemming:nopole}, $\ip{u}{\psi(x)} \neq 0$ for any $x \in \bm{R}$, and it follows that $r$ has no zeroes or poles on $\bm{R}$ (the simple zeroes of $h$ on $\bm{R}$ coincide with the simple poles of $\hat{u}$). Hence for any $f \in \mc{H}$, $\wt{f} = r \hat{f}$, so that for any $f, g \in \mc{H}$,
\be \ip{f}{g} = \intfty \wt{f} (x) \ov{\wt{g} (x)} \frac{1}{|E(x)| ^2} dx = \intfty \hat{f} (x) \ov{\hat{g} (x)} \left| \frac{r(x)}{E(x)} \right| ^2 dx .\ee

The following theorem of Krein then implies that this measure $\sigma$ defined by $d\sigma (x) := \left| \frac{r(x)}{E(x)} \right| ^2 dx$ is in fact
a $u$-spectral measure for $T$ \cite[Theorem 2.1.1, pg. 49]{Krein}.

\begin{thm}{ (Krein)}
A Borel measure $\nu$ on $\bm{R}$ is a $u-$spectral measure if and only if $\ip{f}{g} = \intfty \hat{f} (x) \ov{\hat{g} (x)} d\nu (x)$ for all
$f, g \in \mc{H}$. \label{thm:specmeas}
\end{thm}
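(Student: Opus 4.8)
The plan is to establish the two implications separately: the forward direction will be immediate from Theorem~\ref{thm:transform}, while the content of the statement lies in the converse, which I would prove by producing an explicit dilation. Suppose first that $\nu$ is a $u$-spectral measure, so that $\nu (\Om ) = \ip{Q(\Om )u}{u}$ for some unital POVM $Q$ that diagonalizes $T$. Then $\nu$ is exactly the measure $\sigma$ associated to $Q$ in the construction preceding Theorem~\ref{thm:transform}, and that theorem states that the map $Vf = \hat f$ (which depends only on $T$ and $u$, not on the choice of $Q$) is isometric from $\mc{H}$ into $L^2 (\bm{R} , d\nu )$. Polarizing the norm identity $\| \hat f \| _{L^2 (d\nu )} ^2 = \| f \| ^2$ then gives $\ip{f}{g} = \intfty \hat f (x) \ov{\hat g (x)} \, d\nu (x)$, which is the forward implication.

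For the converse, assume this inner product identity holds for all $f, g \in \mc{H}$. Putting $g = f$ shows $\hat f \in L^2 (\bm{R} , d\nu )$ with $\| \hat f \| _{L^2 (d\nu )} = \| f \|$, so $V : f \mapsto \hat f$ is an isometry of $\mc{H}$ into the Hilbert space $\mc{K} := L^2 (\bm{R} , d\nu )$; as $V$ is isometric, its range is closed and $P := V V^*$ is the orthogonal projection of $\mc{K}$ onto $V\mc{H}$. The structural heart of the argument is the intertwining identity $\widehat{Tf} (z) = z \, \hat f (z)$ for $f \in \dom{T}$: for $z \notin \bm{R}$ one has $\psi (\ov{z}) \in \ran{T - z} ^\perp$, hence $\ip{Tf}{\psi (\ov{z})} = z \ip{f}{\psi (\ov{z})}$, and dividing through by the scalar $\ip{\psi (\ov{z})}{u}$ as in the definition of $\delta$ yields $\ip{Tf}{\delta (\ov{z})} = z \ip{f}{\delta (\ov{z})}$; since $\delta$ is pole-free on $\bm{R}$, this identity of meromorphic functions persists for real arguments. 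Writing $M_\nu$ for multiplication by the independent variable on $\mc{K}$, a self-adjoint operator (for any finite Borel measure) whose spectral measure is multiplication by $\chi _\Om$, the identity says precisely that $V\dom{T} \subset \dom{M_\nu}$ and $M_\nu Vf = V Tf$. In other words, under the identification $\mc{H} \cong V\mc{H} \subset \mc{K}$, the self-adjoint operator $M_\nu$ is an extension of $T$ into the larger space $\mc{K}$.

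I would then define $Q(\Om ) := V^* \chi _\Om (M_\nu ) V$, so that the triple $(\mc{K} , M_\nu , P)$ is a Naimark dilation realizing $Q$. Each $Q(\Om ) \geq 0$, one has $Q(\bm{R} ) = V^* V = \bm{1}$, and $Q$ is countably additive in the strong operator topology, so $Q$ is a unital POVM; being the compression to $V\mc{H}$ of the projection-valued spectral measure of the self-adjoint extension $M_\nu \supset T$, it diagonalizes $T$ in the sense described before Theorem~\ref{thm:transform}. It remains to identify $\nu$ with the $u$-spectral measure determined by $Q$, and here the normalization built into $\delta (z) = \psi (z) / \ip{\psi (z)}{u}$ does the work: a direct computation gives $\hat u (z) = \ip{u}{\delta (\ov{z})} = \ip{u}{\psi (\ov{z})} / \ov{\ip{\psi (\ov{z})}{u}} \equiv 1$ using $\ip{u}{\psi (\ov{z})} = \ov{\ip{\psi (\ov{z})}{u}}$. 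Consequently $\ip{Q(\Om )u}{u} = \intfty \chi _\Om (x) \, |\hat u (x)| ^2 \, d\nu (x) = \nu (\Om )$, so $\nu$ is a $u$-spectral measure, completing the converse.

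The step I expect to be the main obstacle is certifying that $Q$ genuinely diagonalizes $T$: one must check that $M_\nu$ is a bona fide self-adjoint extension of $T$ inside $\mc{K}$ (which rests on the intertwining identity together with the self-adjointness of multiplication by the independent variable on $L^2$ of an arbitrary finite Borel measure) and then recognize $Q (\Om ) = V^* \chi _\Om (M_\nu ) V$ as the associated Naimark compression, the abelian instance of the Stinespring dilation invoked elsewhere in the paper. The remaining verifications --- that $\hat f \in L^2 (d\nu )$, the intertwining identity, and the computation $\hat u \equiv 1$ --- are routine, as is the observation that any $u$-spectral measure is finite with total mass $\nu (\bm{R} ) = \| u \| ^2$.
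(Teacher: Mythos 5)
The paper does not actually prove this statement: it is quoted directly from Krein's lectures (the citation [Theorem 2.1.1, pg.\ 49] of the Gorbachuk--Gorbachuk volume), with only a remark that the proof survives for non-densely defined $T$. So there is no in-paper argument to compare yours against; what you have produced is a reconstruction of the classical proof, and it is sound. The forward direction is, as you say, just Theorem \ref{thm:transform} plus polarization. For the converse, your plan --- take $L^2 (\bm{R} , d\nu )$ itself as the Naimark dilation space, use $\psi (\ov{z}) \in \ran{T-z} ^\perp$ to get $\ip{Tf}{\delta (\ov{z})} = z \ip{f}{\delta (\ov{z})}$ and hence that multiplication by the independent variable on $L^2 (d\nu )$ is a self-adjoint extension of $VTV^*$, then compress its spectral measure to obtain the POVM $Q$ --- is exactly the mechanism underlying Krein's theorem, and each step checks: $M_\nu$ is self-adjoint on its maximal domain for any finite Borel measure, $Q(\bm{R}) = V^* V = \bm{1}$, and the continuation of the intertwining identity to $\bm{R}$ is legitimate because regularity of $T$ (Lemma \ref{lemming:nopole}) keeps $\delta$ pole-free there. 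One caveat worth flagging: your computation $\hat{u} \equiv 1$ is correct under the literal definition $\hat{f}(z) = \ip{f}{\delta (\ov{z})}$ with $\delta = \psi / \ip{\psi}{u}$, and it is precisely what makes $\ip{Q(\Om )u}{u} = \nu (\Om )$ come out on the nose; but it sits in tension with the paper's later line $r(z) = h(z) \hat{u}(z) = h(z) \ip{u}{\psi (\ov{z})}$, which silently drops the normalization. That is a notational inconsistency in the paper rather than a flaw in your argument --- the normalized convention, under which the gauge is sent to the constant function $1$, is the one for which the theorem as stated is true.
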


Note that since $E(x)$ has no real zeroes and $r$ has no real zeroes or poles, that $\sigma$ is in fact equivalent to Lebesgue measure on $\bm{R}$, and that $\sigma ' , \frac{1}{\sigma '}$ are both locally $L^\infty$.

The following theorem on $u-$spectral measures (the form below is valid for $T \in \symr{\mc{H}}$, and for our choice of gauge $u \in \ker{T^* -i}$) is also due to Krein \cite[Corollary 2.1 ,pg 16]{Krein}:

\begin{thm}{ (Krein)}
    Suppose that $T \in \symr{\mc{H}}$, and $0 \neq u \in \ker{T^* -i}$. Let $Q$ be a POVM of some densely defined self-adjoint extension $T' \supset T$, and let $\nu (\cdot )
:= \ip{ Q (\cdot ) u }{u}$ be a $u-$spectral measure of $T$. Then for any Borel set $\Om$,
\be \ip{ Q(\Om ) f }{g} = \int _\Om \hat{f} (x) \ov{\hat{g} (x)} d\nu (x) .\ee \label{thm:equalip}
\end{thm}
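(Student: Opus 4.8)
The plan is to show that for each fixed $f,g \in \mc H$ the two complex Borel measures $\Om \mapsto \ip{Q(\Om)f}{g}$ and $\Om \mapsto \int _\Om \hat f \, \ov{\hat g} \, d\nu$ agree on $\bm R$. Because a complex measure on $\bm R$ is determined by its Cauchy transform, it suffices to prove, for every $z \in \bm C \sm \bm R$,
\be \ip{(S-z)^{-1} f}{g} _{\mc K} = \intfty \frac{\hat f(x) \, \ov{\hat g (x)}}{x - z} \, d\nu (x) , \ee
where $S \supset T$ is the self-adjoint dilation on $\mc K \supset \mc H$ underlying $Q$ (so $Q(\Om) = P \chi _\Om (S) P$), and I have used that $\ip{Q(\Om) f}{g} = \ip{\chi _\Om (S) f}{g} _{\mc K}$, whence the Cauchy transform of the left-hand measure is the compressed resolvent $\ip{(S-z)^{-1} f}{g} _{\mc K}$. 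This identity already holds in two extreme cases: for $\Om = \bm R$ it is Theorem \ref{thm:specmeas}, and for $f = g = u$ it is immediate, since $\hat u \equiv 1$ (as $\ip{\delta (\ov w)}{u} = 1$ by construction of $\delta$) and $\nu (\cdot) = \ip{Q(\cdot) u}{u}$.

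The main step is to evaluate the compressed resolvent in Krein's representation. Fix $z$ and set $g_z := P (S-z)^{-1} f \in \mc H$. For every $\phi \in \dom T$ one has $(T - \ov z) \phi = (S - \ov z) \phi$, and a short computation using $\big( (S-z)^{-1} \big) ^{*} = (S-\ov z)^{-1}$ gives $\ip{(T - \ov z) \phi}{g_z} = \ip{\phi}{f}$; hence $g_z \in \dom{T^*}$ with $(T^* - z) g_z = f$, and $\ip{(S-z)^{-1} f}{g} _{\mc K} = \ip{g_z}{g}$ for all $g \in \mc H$. Applying Theorem \ref{thm:specmeas} to the pair $(g_z , g)$ rewrites the left-hand side of the target identity as $\int \hat g_z \, \ov{\hat g} \, d\nu$, so the whole problem reduces to computing $\hat g_z$. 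Using $(T^* - z) g_z = f$, the deficiency relation $T^* \delta (\ov w) = \ov w \, \delta (\ov w)$, and the Lagrange boundary form $B(g_z , \delta (\ov w)) := \ip{T^* g_z}{\delta (\ov w)} - \ip{g_z}{T^* \delta (\ov w)}$, I find
\be \hat g_z (w) = \frac{\hat f (w) - B(g_z , \delta (\ov w))}{w - z} . \ee
Substituting this into $\int \hat g_z \, \ov{\hat g} \, d\nu$ produces the desired main term $\int (x-z)^{-1} \hat f \, \ov{\hat g} \, d\nu$ together with a correction coming from the boundary form.

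The crux, and the step I expect to be the main obstacle, is then to show that the boundary correction $\int (x - z)^{-1} B(g_z , \delta (x)) \, \ov{\hat g (x)} \, d\nu (x)$ vanishes for every $g \in \mc H$, equivalently that $x \mapsto B(g_z , \delta (x)) / (x - z)$ is orthogonal to $\hat{\mc H} = V \mc H$ in $L^2 (\bm R , d\nu)$. This is exactly the content of Krein's generalized resolvent formula for deficiency indices $(1,1)$: the compressed resolvent of $S$ differs from the canonical resolvent of the fixed reference extension $T' \subset \mc H$ (the one used to build $\psi$ and $\delta$) only by a rank-one term directed along $\delta (z)$, whose scalar coefficient is the Nevanlinna parameter attached to the dilation $S$ and is precisely what pairs with $\nu = \ip{Q(\cdot) u}{u}$. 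I would discharge the orthogonality by invoking this resolvent formula directly and matching that coefficient against the Cauchy transform $\intfty (x-z)^{-1} d\nu$ of $\nu$. As a base case and sanity check, when $S$ can be taken inside $\mc H$ so that $Q(\Om) = \chi _\Om (T')$ is a genuine projection-valued measure, regularity of $T$ makes its spectrum simple and pure point, say $\{ e_\la \}$; Lemma \ref{lemming:nopole} gives $\ip{u}{e_\la} \neq 0$, one computes $\hat f (\la) = \ip{f}{e_\la} / \ip{u}{e_\la}$ and $\nu (\{ \la \}) = |\ip{u}{e_\la}| ^2$, and the identity collapses to the explicit expansion $\ip{Q(\Om) f}{g} = \sum _{\la \in \Om} \ip{f}{e_\la} \ip{e_\la}{g}$, in which the boundary term manifestly disappears.
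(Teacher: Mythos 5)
First, a point of reference: the paper does not actually prove this theorem. It is quoted from Krein's lectures (\cite[Corollary 2.1, pg 16]{Krein}) with only the remark that Krein's original argument for densely defined $T$ extends to the non-densely defined case. So your proposal has to stand on its own, and cannot be compared to an argument in the text.

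Your reductions are correct as far as they go: a finite complex Borel measure is determined by its Cauchy transform; the Cauchy transform of $\Om \mapsto \ip{Q(\Om)f}{g}$ is the compressed resolvent $\ip{(S-z)^{-1}f}{g}_{\mc{K}} = \ip{g_z}{g}$ with $g_z := P(S-z)^{-1}f$; the verification that $(T^*-z)g_z = f$ is right; and the formula $\hat{g}_z(w) = (w-z)^{-1}\left(\hat{f}(w) - B(g_z,\delta(\ov{w}))\right)$ follows correctly from the boundary-form computation (all of this tacitly assumes $T$ densely defined, whereas $\symr{\mc{H}}$ as defined in the paper admits non-densely defined $T$; that is a minor caveat the paper itself waves away). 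The genuine gap is the step you yourself identify as the crux. Applying Theorem \ref{thm:specmeas} to the pair $(g_z,g)$ turns the target identity into $\int \hat{g}_z\,\ov{\hat{g}}\,d\nu = \int (x-z)^{-1}\hat{f}\,\ov{\hat{g}}\,d\nu$, which by your formula for $\hat{g}_z$ is \emph{exactly} the assertion that the boundary correction integrates to zero against every $\ov{\hat{g}}$. Your reduction therefore transforms the theorem into an equivalent statement, and the proposal then ``discharges'' that statement by citing Krein's formula for generalized resolvents without performing the matching: one must actually write $P(S-z)^{-1}|_{\mc{H}}$ as the resolvent of a fixed canonical extension inside $\mc{H}$ plus a rank-one term along $\psi(z)$ whose coefficient involves the Weyl function $m(z)$ and the Nevanlinna parameter $\tau(z)$ of $S$, express the Cauchy transform of $\nu$ through the same $m$ and $\tau$, and check that the two determinations of $B(g_z,\delta(\ov{w}))$ coincide. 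That computation is where the entire content of the theorem lives; the PVM base case you verify is precisely the case with trivial exit space, where the difficulty is absent. As written, the proposal is a sound framework with the key lemma left unproven (though true, and provable along the route you sketch).
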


\subsubsection{Remark} Krein's theorems, Theorem \ref{thm:transform}, Theorem \ref{thm:specmeas} and Theorem \ref{thm:equalip}, were originally stated for densely defined $T \in \sym{\mc{H}}$  \cite{Krein}.
However, the extended statements above hold for non-densely defined $T$ with essentially no modification of Krein's original proofs.

Now suppose that $S \subset L^2 (\bm{R})$ and that $T = M_S \in \symr{S}$ is a restriction of $M$. Then $M$ is a self-adjoint extension of $M_S$, so that we can define the $u$-spectral measure $\mu (\Om ) := \ip{ \chi _\Om (M) u }{u}$. Since $M$ is multiplication by $x$ in $L^2 (\bm{R})$, the measure $\mu $ is absolutely continuous with respect to Lebesgue measure so that $d\mu (x) = \mu ' (x) dx$. Hence if $\ip{\hat{f}}{\hat{g}} _\mu := \intfty \hat{f} (x) \ov{\hat{g} (x)} \mu ' (x) dx$, then $\ip{\hat{f}}{\hat{g}} _\mu = \ip{f}{g}$ by Theorem \ref{thm:specmeas}.

Moreover, Theorem \ref{thm:equalip} implies that for any $f , g \in S$,

\ba \ip{ \chi _\Om (M) f }{g} & = & \int _\Om \hat{f} (x) \ov{\hat{g} (x) } \mu' (x) dx \\
&  = & \int _\Om \left| \frac{E(x)}{r(x)} \right| ^2 \mu ' (x) \wt{f} (x) \ov{\wt{g} (x)} \frac{1}{|E (x) | ^2} dx  \\
& = & \ip{ R(\wt{M} ) \chi _\Om (\wt{M} ) \wt{f}}{\wt{g}} _E , \label{eq:twiner}\ea
where $R(x) := \left| \frac{E(x)}{r(x)} \right| ^2 \mu ' (x) $ is locally $L^1$.  Here $\wt{M}$ denotes multiplication by the independent variable in $L^2 (\bm{R} , |E(x)| ^{-2} dx) \supset \mc{H} (E) = \wt{\mc{H}}$.

\subsubsection{Remark} \label{subsubsection:cyclic} In fact $\mu' (x) >0 \ \mr{a.e.}$. Otherwise there would be a Borel subset $\Om \subset \bm{R}$
of non-zero Lebesgue measure such that $\ip{\chi _\Om (\hat{M}) \hat{f} }{\hat{g}} _\mu =0$ for all $f,g \in \mc{H}$, where $\hat{M}$ denotes multiplication by $z$ in $\hat{\mc{H}} \subset L^2 (\bm{R} , d\mu )$. But this would imply that \be \left< \left| \frac{E(\wt{M} )}{r (\wt{M})} \chi _\Om (\wt{M}) \right| ^2  \wt{f} , \wt{g} \right> _E =0,  \ee for all $\wt{f}, \wt{g} \in \mc{H} (E)$, where $\wt{M}$ denotes multiplication by $z$ in $\mc{H} (E)$.
Since $E(x) / r(x) $ is non-zero almost everywhere with respect to Lebesgue measure, this would imply that elements of $\mc{H} (E)$ vanish almost everywhere
on $\Om$. This is impossible as elements of $\mc{H} (E)$ are entire functions. In conclusion $\mu ' >0$ almost everywhere. The fact that $\mu' >0$ almost everywhere where $\mu (\Om ) = \ip{ \chi _\Om (M) u }{u}$ also shows that the gauge $u$ is non-zero almost everywhere. This shows that the subspace $S$ contains an element which is non-zero almost everywhere with respect to Lebesgue measure so that $S$ is cyclic (and separating) for the von Neumann algebra generated by bounded functions of $M$.  The fact that $\mu ' >0$ almost everywhere also implies that $R(x) > 0 \ \mr{a.e.}$. These facts will be useful later.

Observe that
\ba \ip{R(\wt{M}) \wt{f} }{\wt{g} } _E & = & \intfty \frac{\wt{f} (x)}{r(x)} \ov{ \frac{\wt{g} (x)}{r(x)} } \mu ' (x) dx \\
& =& \ip{\hat{f}}{\hat{g}} _\mu  = \ip{f}{g} = \ip{\wt{f}}{\wt{g}} _E . \label{eq:partiso} \ea
This calculation shows that $R ^{1/2} (\wt{M}) P_E $ is a partial isometry in $L^2 (\bm{R} , |E(x) |^{-2} dx)$ with initial space $\mc{H} (E)$.

Now let $\theta := \frac{E^*}{E}$, a meromorphic inner function. Then multiplication by $\frac{1}{E}$ is an isometry of $L^2 (\bm{R} , | E(x) | ^{-2} dx)$
onto $L^2 (\bm{R} )$ that takes $\mc{H} (E)$ onto $K^2 _\theta$, and which intertwines $\wt{M}$ and $M$, the operators of multiplication by
the independent variable in $L^2 (\bm{R} , |E(x) | ^{-2} dx )$ and $L^2 (\bm{R} )$. Let $V: S \rightarrow K^2 _\theta $ be the isometry defined by
$Vf := \frac{\wt{f}}{E}$, and let $V_0 := V P_S$ be the corresponding partial isometry on $L^2 (\bm{R} )$. It then follows from equation (\ref{eq:twiner}) that given
any Borel set $\Om$ and $f,g \in L^2 (\bm{R})$,

\be \ip{P_S \chi _\Om (M) P_S f}{g} = \ip{ P_\theta R(M) \chi _\Om (M) P _\theta V_0 f}{V_0 g} .\ee

Let $\vnm$ denote the von Neumann algebra of $L^\infty$ functions of $M$, and let $R:= R(M) \geq 0 $, which is affiliated with $\vnm$. It follows that for any $m \in \vnm$.

\be  P_S m P_S = V_0 ^* P_\theta  \sqrt{R} \, m \, \sqrt{R} P _\theta V_0 . \ee

Given a projector $P$, we let $\mc{P}$ denote the completely positive map $\mc{P} (A) = P A P$, and if
$B \in B (L^2 (\bm{R}) )$, let $\mr{Ad} _B$ denote the completely positive map $\mr{Ad} _B (A) = B A B^*$.
The above equation shows that

\be \ad{V_0 ^*} \circ \mc{P} _\theta \circ \ad{\sqrt{R}}  \left| _{\vnm} \right. = \mc{P} _S \left| _{\vnm} \right. . \label{eq:cpeq} \ee

Note that since, by equation (\ref{eq:partiso}), $R^{1/2}  P _\theta$ is a partial isometry, that the completely positive map $\Phi _1:= \mc{P} _\theta \circ \ad{\sqrt{R}} : B (L^2 (\bm{R} ) ) \rightarrow B( K^2 _\theta ) $ is unital.

In the next section we will use the dilation theory of completely
positive maps to show that equation (\ref{eq:cpeq}) implies that the partial isometry $V_0 ^* : K^2 _\theta \rightarrow S$ acts as the restriction of an element affiliated with $\vnm$ to $S$, \emph{i.e.} $V_0 ^*$ acts as multiplication by a function $\ov{v(x)}$.  It will follow easily from this that $S$ is nearly invariant.

\section{Application of Dilation Theory}

It will be convenient to use a number of acronyms. CP means completely positive, CPU means CP and unital, TP means trace preserving. A CPTPU map is a completely positive unital and trace preserving map, which is also
sometimes called a quantum channel. SSD stands for Stinespring dilation.

The following lemma can be proven using Stinespring's theorem.

\begin{lemming}
Let $\mc{A}$ be a unital $C^*$ algebra. Suppose that $\phi _1 : \mc{A} \rightarrow B (\mc{H} _1 )$ and $\phi _2 : \ran{\phi_1} \rightarrow B (\mc{H} _2 )$
are CP maps such that $\mc{H} _i$ are separable. If $\pi _1$ and $\pi_2$ are the minimal Stinespring dilations of the $\Phi _1 = \phi _1$ and
$\Phi _2 := \phi _2 \circ \phi _1$, then there is a contractive $*$-homomorphism $\pi $ such that $\pi \circ \pi _1 = \pi _2$.
\end{lemming}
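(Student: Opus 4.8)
The plan is to build the minimal dilation of $\Phi_2$ in two stages — first dilate $\Phi_1$, then dilate $\phi_2$ on top of the resulting representation — and to extract $\pi$ from the uniqueness of minimal Stinespring dilations. Write the minimal SSD of $\Phi_1 = \phi_1$ as
\be \Phi_1 (a) = V_1 ^* \pi_1 (a) V_1 , \qquad \mc{K} _1 = \ov{\mr{span}} \{ \pi_1 (a) V_1 h : a \in \mc{A} , \ h \in \mc{H} _1 \} , \ee
with $\pi_1 : \mc{A} \to B(\mc{K} _1)$ a unital $*$-representation and $V_1 : \mc{H} _1 \to \mc{K} _1$; write the minimal SSD of $\Phi_2$ as $\Phi_2 (a) = V_2 ^* \pi_2 (a) V_2$ on $\mc{K} _2$, with $V_2 : \mc{H} _2 \to \mc{K} _2$.

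First I would transport $\phi_2$ onto the image of $\pi_1$. Define $\wt{\phi} _2$ on the unital $*$-algebra $\pi_1 (\mc{A} ) \subset B(\mc{K} _1)$ by $\wt{\phi} _2 ( \pi_1 (a) ) := \Phi_2 (a) = \phi_2 ( \Phi_1 (a) )$. This is well defined because $\Phi_1 (a) = V_1 ^* \pi_1 (a) V_1$ depends on $a$ only through $\pi_1 (a)$; indeed $\wt{\phi} _2 = \phi_2 \circ \ad{V_1 ^*} \left| _{\pi_1 (\mc{A} )} \right.$, and since $\ad{V_1 ^*}$ is CP and carries $\pi_1 (\mc{A} )$ onto $\ran{\Phi_1} = \ran{\phi_1} = \dom{\phi_2}$, the map $\wt{\phi} _2$ is a composition of CP maps, hence CP. As $\pi_1 (\mc{A} )$ is a unital operator system, $\| \wt{\phi} _2 \| = \| \wt{\phi} _2 (\bm{1} ) \| = \| \Phi_2 (\bm{1} ) \| < \infty$, so $\wt{\phi} _2$ extends by continuity to a CP map on the $C^*$-algebra $\mc{C} := \ov{\pi_1 (\mc{A} )} = C^* ( \pi_1 (\mc{A} ) )$, still satisfying $\wt{\phi} _2 \circ \pi_1 = \Phi_2$.

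Next, take a minimal SSD $\wt{\phi} _2 (c) = W^* \rho (c) W$ of $\wt{\phi} _2$, with $\rho : \mc{C} \to B (\mc{L} )$ and $\mc{L} = \ov{\mr{span}} \{ \rho (c) W h : c \in \mc{C} , \ h \in \mc{H} _2 \}$. Then $\rho \circ \pi_1 : \mc{A} \to B(\mc{L} )$ is a $*$-representation with $W^* (\rho \circ \pi_1 )(a) W = \wt{\phi} _2 ( \pi_1 (a) ) = \Phi_2 (a)$, so $( \mc{L} , \rho \circ \pi_1 , W)$ is a Stinespring dilation of $\Phi_2$. It is \emph{minimal}: since $\pi_1 (\mc{A} )$ is norm-dense in $\mc{C}$ and $\rho$ is continuous, each $\rho (c) W h$ lies in the closed span of $\{ \rho ( \pi_1 (a) ) W h = (\rho \circ \pi_1 )(a) W h \}$, so the two closed spans coincide and equal $\mc{L}$. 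By uniqueness of minimal Stinespring dilations there is a unitary $U : \mc{K} _2 \to \mc{L}$ with $U V_2 = W$ and $U \pi_2 (a) U^* = \rho ( \pi_1 (a) )$ for all $a \in \mc{A}$. Finally I would set $\pi := \ad{U^*} \circ \rho : \mc{C} \to B (\mc{K} _2 )$; it is a $*$-homomorphism, hence automatically contractive, and $\pi ( \pi_1 (a) ) = U^* \rho ( \pi_1 (a) ) U = \pi_2 (a)$, i.e.\ $\pi \circ \pi_1 = \pi_2$.

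The main obstacle I anticipate lies in the second paragraph: verifying that $\wt{\phi} _2$ is genuinely well defined and completely positive on $\pi_1 (\mc{A} )$, and extends to the generated $C^*$-algebra. This is precisely where the hypothesis that $\phi_2$ is defined (and CP) on $\ran{\phi_1}$ rather than on all of $B(\mc{H} _1)$ enters, and where the factorization $\Phi_1 = \ad{V_1 ^*} \circ \pi_1$ supplied by the first Stinespring dilation does the crucial work. The remaining delicate point is confirming that $( \mc{L} , \rho \circ \pi_1 , W)$ is still \emph{minimal}, since it is minimality that licenses the appeal to uniqueness and produces the intertwining unitary $U$; this, however, follows cleanly from density of $\pi_1 (\mc{A} )$ in $\mc{C}$ together with the continuity of the $*$-representation $\rho$.
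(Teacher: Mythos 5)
Your argument is correct, but it takes a genuinely different route from the paper's. The paper works inside the explicit Stinespring construction: it realizes $\mc{K} _1$ and $\mc{K} _2$ as completions of $\mc{A} \otimes \mc{H} _i$, computes $\| \pi _1 (a) \|$ in terms of the positive matrices $[a_i ^* a^* a a_j]$, and uses the factorization $\Phi _2 ^{(N)} = \phi _2 ^{(N)} \circ \Phi _1 ^{(N)}$ to deduce the kernel containment $\ker{\pi _1} \subset \ker{\pi _2}$; the map $\pi$ is then defined by passing to the quotient, and contractivity follows because $\pi _1 (\mc{A})$ is a $C^*$-algebra. You instead stay at the abstract level: you transport $\phi _2$ to a CP map $\wt{\phi} _2$ on $\pi _1 (\mc{A})$ via $\ad{V_1 ^*}$ --- which is exactly where the hypothesis that $\phi _2$ is CP on $\ran{\phi _1}$ enters, playing the same role as the kernel containment does in the paper --- then dilate $\wt{\phi} _2$, observe that composing with $\pi _1$ yields a \emph{minimal} dilation of $\Phi _2$, and extract $\pi$ from uniqueness of minimal dilations. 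Your route buys an automatic contractivity argument and avoids the tensor-product bookkeeping, at the cost of a two-stage dilation; the paper's route is more hands-on and makes the relation between the kernels of $\pi _1$ and $\pi _2$ explicit. Two minor points: since $\pi _1 (\mc{A})$ is the image of a $C^*$-algebra under a $*$-homomorphism it is already norm-closed, so your extension of $\wt{\phi} _2$ to $\mc{C} = \ov{\pi _1 (\mc{A})}$ is vacuous (harmless); and when checking that $\wt{\phi} _2$ is CP it is worth noting explicitly that a positive element of $M_N (\pi _1 (\mc{A}))$ is carried by compression with $V_1 ^{\oplus N}$ to a positive matrix whose entries lie in $\ran{\phi _1}$, so that the assumed complete positivity of $\phi _2$ applies entrywise --- this makes your one-line ``composition of CP maps'' claim airtight.
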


One can prove this by inspecting the proof of Stinespring's theorem as presented in \cite{Paulsen}.

\begin{proof}
Begin by constructing the representations $\pi _i$ as in the proof of Stinespring's theorem.  Consider the algebraic tensor products $\mc{A} \otimes \mc{H} _i =: \mc{K} _i '$.
Then define inner products on the $\mc{K} _i '$ by $( a \otimes x_i , b \otimes y_i) _i = \ip{\Phi _i (b^* a ) x_i}{ y_i } _i$ where $a,b \in \mc{A}$, $x_i, y_i \in \mc{H} _i$. Then
as per the usual proof, the Cauchy-Schwarz inequality can be applied to show that $\mc{N} _i := \{ u \in \mc{K} _i | \ (u , u) _i =0 \}$ is a vector subspace of $\mc{K} _i$. One
then defines the Hilbert spaces $\mc{K} _i$ to be the completions of $\mc{K} _i ' / \mc{N} _i$ with respect to the inner product $\ip{ u_i + \mc{N} _i}{ v_i + \mc{N} _i } _i := ( u _i , v_i ) _i$.
Now for $a \in \mc{A}$ define $\pi _i (a) : \mc{K} _i \rightarrow \mc{K} _i$ by $\pi _i (a) \sum a_k \otimes x_k = \sum a a_k \otimes x_k$. The usual proof of Stinespring's theorem shows that
this yields (not necessarily minimal) Stinespring dilations of the CP maps $\Phi _i$.

Now,
\ba \| \pi _1 (a)  \| & = & \sup _{ u = \sum a_j \otimes x_j + \mc{N} _1  \in \mc{K} _1 / \mc{N} _1  \ \ \| u \| _1 = 1} \left( \pi _1 (a) \sum a_j \otimes x_j , \pi _1 (a) \sum a_j \otimes x_j \right) _1 \nonumber \\
& = & \sum \ip{ \Phi _1 ( a_i ^* a^* a a_j ) x_j }{x_i} _{\mc{H} _1} \ea

It follows that if $\pi _1 (a) = 0 $ that for any $(a_1, ..., a_N) \in \mc{A} ^{(N)} = \otimes _{i =1} ^N \mc{A}$, and any $\vec{x} = (x_1, ... x_N) \in \mc{H} _1 ^{(N)}$ that
$\ip{ \Phi _1 ^{(N)} ([a_i ^* a^* a a_j ] ) \vec{x} }{\vec{x}} _{\mc{H} _1 ^{(N)}} =0$ so that $[a_i ^* a^* a a_j ] \in \ker{\Phi _1 ^{(N)}}$. Hence
$[a_i ^* a^* a a_j ] \in \ker \Phi _2 ^{(N)} = \phi _2 ^{(N)} \circ \Phi _1 ^{(N)}$, which in turn shows that $\| \pi _2 (a) \| =0$. Hence $\ker{\pi _1 } \subset \ker{\pi _2}$.

Define $\pi : \pi _1 (\mc{A} ) \rightarrow \pi _2 (\mc{A} )$ by $\pi \circ \pi _1 = \pi _2$. The above calculation shows that $\pi $ is a well-defined $*-$homomorphism. Also
$\pi _1 (a) \in \ker{\pi}$ if and only if $a \in \ker{\pi} _2 \supset \ker{\pi} _1$. Hence $\ker{\pi}$ is closed and is isomorphic to $\frac{\ker{\pi} _2}{\ker{\pi} _1}$. If
we define the map $\hat{\pi} : \pi _1 (\mc{A} / \ker{\pi} ) \rightarrow \pi _2 (\mc{A})$ then this is an isomorphism of $C^*$ algebras and is hence isometric. It follows that $\pi$ is
a contractive $*-$homomorphism.

\end{proof}

This basic fact will now be used to prove the following lemma:

\begin{lemming}
Let $\mc{B} \subset \mc{A}$ be $C^*$-algebras. Let $\Phi _i$ be CP maps from $\mc{A}$ into $B (\mc{H} _i)$. Let $\Phi  : B (\mc{H} _1 ) \rightarrow
B (\mc{H} _2 ) $ be a CPU map such that $\Phi \circ \Phi _1 | _{\mc{B}} = \Phi _2 | _{\mc{B}}$. Further assume that $\Phi _i$ and $\Phi _i | _\mc{B}$ have the
same minimal Stinespring dilations. Let $(\pi _i , V_i,  \mc{K} _i) $ be the minimal SSD's of the $\Phi _i$, $( \pi ' , V',  \mc{K} ') $ the minimal SSD of $\Phi \circ \Phi _1$. Then $\mc{K} _2 \subset \mc{K} '$
is reducing for $\pi ' | _\mc{B}$ and there is an onto
$*$-homomorphism $\pi :  \pi _1 (\mc{A} ) \rightarrow  \pi ' ( \mc{A} ) $ such that $\pi \circ \pi _1 | _\mc{B} = \pi _2 | _\mc{B} = \mc{P} _{\mc{K} _2} \circ \pi ' | _\mc{B}$.

\label{lemming:structure}

\end{lemming}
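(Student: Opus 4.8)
The plan is to reduce everything to the preceding lemma together with the uniqueness of minimal Stinespring dilations. First I would apply that lemma with $\phi _1 := \Phi _1 : \mc{A} \rightarrow B(\mc{H} _1)$ and $\phi _2 := \Phi | _{\ran{\Phi _1}}$. Since $\Phi$ is CPU, its restriction to the operator system $\ran{\Phi _1}$ is still CP, and $\phi _2 \circ \phi _1 = \Phi \circ \Phi _1$. The lemma then furnishes a contractive $*$-homomorphism $\pi : \pi _1 (\mc{A}) \rightarrow \pi ' (\mc{A})$ with $\pi \circ \pi _1 = \pi '$, where $(\pi ', V', \mc{K} ')$ is the minimal SSD of $\Phi \circ \Phi _1$. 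Because $\pi (\pi _1 (a)) = \pi ' (a)$ for every $a \in \mc{A}$, the map $\pi$ is automatically surjective onto $\pi ' (\mc{A})$, which supplies the required onto $*$-homomorphism.

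Next I would locate $\mc{K} _2$ inside $\mc{K} '$. The idea is to consider the restriction $\Phi \circ \Phi _1 | _{\mc{B}}$ and realize its minimal Stinespring space concretely inside $\mc{K} '$ as $\mc{K} '' := \overline{\pi ' (\mc{B}) V' \mc{H} _2}$. Since $\mc{B}$ is a $*$-subalgebra, this closed subspace is invariant under both $\pi ' (b)$ and $\pi ' (b) ^* = \pi ' (b^*)$, hence reducing for $\pi ' | _{\mc{B}}$. The hypothesis $\Phi \circ \Phi _1 | _{\mc{B}} = \Phi _2 | _{\mc{B}}$, combined with the standing assumption that $\Phi _2$ and $\Phi _2 | _{\mc{B}}$ share the same minimal dilation, identifies this minimal dilation with $(\pi _2 , V_2 , \mc{K} _2)$. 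By the uniqueness of minimal SSDs the canonical unitary then identifies $\mc{K} _2$ with $\mc{K} ''$, so that $\mc{K} _2 \subset \mc{K} '$ and $\mc{K} _2$ is reducing for $\pi ' | _{\mc{B}}$, as claimed.

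It then remains to read off the intertwining identities. Because $\mc{K} _2$ reduces $\pi ' | _{\mc{B}}$, compression by $\mc{P} _{\mc{K} _2}$ is multiplicative on $\pi ' (\mc{B})$, and $\mc{P} _{\mc{K} _2} \circ \pi ' | _{\mc{B}}$ is exactly the sub-representation carried on $\mc{K} _2$, which under the identification above is $\pi _2 | _{\mc{B}}$; this yields $\pi _2 | _{\mc{B}} = \mc{P} _{\mc{K} _2} \circ \pi ' | _{\mc{B}}$. Feeding in $\pi \circ \pi _1 = \pi '$ and compressing to the reducing subspace $\mc{K} _2$ then gives $\mc{P} _{\mc{K} _2} \circ \pi \circ \pi _1 | _{\mc{B}} = \pi _2 | _{\mc{B}}$, which is the asserted chain of equalities (the first equality is understood on $\mc{K} _2$, where compression of $\pi ' = \pi \circ \pi _1$ agrees with $\pi _2$).

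I expect the principal difficulty to be the second step. One must verify rigorously that the minimal dilation of a restricted CP map $\Psi | _{\mc{B}}$ sits inside the dilation of $\Psi$ as precisely the reducing subspace generated by $\pi (\mc{B}) V \mc{H}$, and then invoke uniqueness in exactly the right form, being careful that the assumption ``$\Phi _i$ and $\Phi _i | _{\mc{B}}$ have the same minimal SSD'' is doing real work here rather than an unwarranted claim that restriction commutes with dilation in general. The unitality of $\Phi$ is what keeps $V'$ an isometry and makes the compression bookkeeping clean, so tracking where CPU is used is the delicate point.
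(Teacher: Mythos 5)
Your proposal is correct and follows essentially the same route as the paper: apply the preceding lemma to $\Phi_1$ and $\Phi\circ\Phi_1$ to obtain the onto $*$-homomorphism $\pi$ with $\pi\circ\pi_1=\pi'$, and use the hypothesis that $\Phi_2$ and $\Phi_2|_{\mc{B}}$ share a minimal SSD together with uniqueness of minimal dilations to embed $\mc{K}_2$ in $\mc{K}'$ as a reducing subspace for $\pi'|_{\mc{B}}$. Your second step merely makes explicit (via $\mc{K}''=\overline{\pi'(\mc{B})V'\mc{H}_2}$) what the paper's proof asserts in one line, so the two arguments coincide in substance.
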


\begin{proof}

If $(\pi ' , V', \mc{K} ')$ is the minimal SSD of $\Phi \circ \Phi _1$, then it is automatically an SSD of $\Phi \circ \Phi _1 | _\mc{B} = \Phi _2 | _{\mc{B}}$. Since $\Phi_2$ and its
restriction to $\mc{B}$ have the same minimal SSD $(\pi _2 , V_2, \mc{K} _2 )$ it follows that we can assume $\mc{K} _2 \subset \mc{K} '$, that $\mc{K} _2 $
is reducing for $\pi ' | _{\mc{B}}$ and that $\mc{P} _{\mc{K} _2} \circ \pi ' | _{\mc{B}} = \pi _2 | _{\mc{B}} $. By the previous lemma,  there is an onto $^*$-homomorphism $\pi : \pi _1 (\mc{A} )
\rightarrow  \pi ' ( \mc{A} ) $ such that $\pi \circ \pi _1 = \pi '$. Hence $\pi \circ \pi _1 | _\mc{B} =
 \mc{P} _{\mc{K} _2} \circ \pi ' | _\mc{B} = \pi _2 | _\mc{B}$.
\end{proof}

Define $\Theta := \mc{P} _{\mc{K} _2} \circ \pi '$.  This is a CPU map which is a contractive $^*$-homomorphism when restricted to $\mc{B}$.

\begin{lemming}
If $S\subset L^2 (\bm{R})$ contains a function which is cyclic and separating for $\mr{vN} (M)$, \emph{i.e.} a
function $f $ which is non-zero almost everywhere with respect to Lebesgue measure, and $P$ is the projection onto $S$, then the minimal
SSD of $\mc{P} : \mr{vN} (M) \subset B(L^2 (\bm{R} ) ) \rightarrow B(S)$ is the identity map on $B(L^2 (\bm{R}))$. \label{lemming:easySSD}
\end{lemming}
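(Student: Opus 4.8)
The plan is to exhibit the evident Stinespring dilation coming from the inclusion $S \hookrightarrow L^2 (\bm{R})$ and then check that it is already minimal, the minimality being forced by the almost-everywhere non-vanishing of the given vector. Recall that an SSD of a CP map $\Phi : \mc{A} \rightarrow B(\mc{H})$ is a triple $(\pi , V, \mc{K})$ with $\pi$ a $*$-representation of $\mc{A}$ on $\mc{K}$ and $V : \mc{H} \rightarrow \mc{K}$ bounded so that $\Phi (a) = V^* \pi (a) V$; it is minimal precisely when $\mc{K} = \ov{\mr{span}} \{ \pi (a) V h : a \in \mc{A} , \ h \in \mc{H} \}$, and the minimal dilation is unique up to unitary equivalence. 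First I would take $\mc{K} := L^2 (\bm{R})$, let $\pi := \id$ be the tautological inclusion $\vnm \hookrightarrow B(L^2 (\bm{R}))$, and let $V : S \hookrightarrow L^2 (\bm{R})$ be the inclusion of $S$, so that $V^* = P$ is the orthogonal projection onto $S$ co-restricted to $S$. A one-line computation then gives $V^* \id (m) V = P m P = \mc{P} (m)$ for every $m \in \vnm$, so $(\id , V , L^2 (\bm{R}))$ is a genuine Stinespring dilation of $\mc{P}$.

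The only substantive point is minimality, i.e. that $\ov{\mr{span}} \{ m s : m \in \vnm , \ s \in S \} = L^2 (\bm{R})$, since here $\mc{H} = S$ and $\pi (m) V s = m s$. Because the cyclic-separating vector $f$ lies in $S$, it suffices to prove the stronger statement that the single orbit $\{ m f : m \in \vnm \} = \{ \varphi f : \varphi \in L^\infty (\bm{R}) \}$ is already dense in $L^2 (\bm{R})$; this is exactly the assertion that $f$ is cyclic for the maximal abelian von Neumann algebra $\vnm = L^\infty (\bm{R})$. I would establish this by a standard truncation: given $g \in L^2 (\bm{R})$, set $A_n := \{ x : 1/n \le |f(x)| \le n , \ |g(x)| \le n \}$ and $\varphi _n := (g/f) \chi _{A_n} \in L^\infty (\bm{R})$. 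Since $f \neq 0$ a.e. and $g$ is finite a.e., the sets $A_n$ increase to a co-null set, so $\varphi _n f = g \chi _{A_n} \to g$ in $L^2 (\bm{R})$ by dominated convergence (dominated by $|g|$). Hence $g$ lies in the closed span of the orbit, and the dilation space exhausts $L^2 (\bm{R})$.

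Combining the two steps, $(\id , V, L^2 (\bm{R}))$ is a minimal SSD of $\mc{P}$, and by uniqueness of the minimal dilation up to unitary equivalence the minimal SSD of $\mc{P}$ is the identity representation on $B(L^2 (\bm{R}))$, as claimed. I do not anticipate a genuine obstacle: the whole content lies in recognising that non-vanishing of $f$ is precisely cyclicity for the masa $\vnm$, after which minimality is automatic. The one place warranting care is the truncation estimate, where the sets $A_n$ must be chosen so that $g/f$ stays bounded while $\chi _{A_n} \to 1$ almost everywhere; this is routine once the masa structure of $\vnm$ and the a.e. non-vanishing of $f$ are invoked.
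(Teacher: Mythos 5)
Your proof is correct and follows the same route as the paper: exhibit $(\id , V, L^2(\bm{R}))$ with $V$ the inclusion of $S$ as the obvious SSD, and reduce minimality to the density of $\vnm\, S$ in $L^2(\bm{R})$, which the paper dismisses as clear from cyclicity of $f$ and you verify with a standard truncation argument. No issues.
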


Here, as before $\mc{P} (A) = P A P$ for any $A \in B(L^2 (\bm{R}))$.

\begin{proof}
    Straightforward: the identity map on $B(L^2 (\bm{R}))$ is clearly an SSD of $\mc{P} | _{\mr{vN} (M)}$. To show that it is minimal one just needs to check that $\mr{vN} (M) S$ is dense
in $L^2 (\bm{R})$.
As $S$ contains an element which is cyclic for $M$, this is clear.
\end{proof}

Applying this to our specific situation yields:

\begin{prop}
    Suppose that $S_i \subset L^2 (\bm{R})$ are cyclic (and hence separating) for $\mr{vN} (M)$ with projections $P_i$, and that there exists a CPU map $\Phi _1 : B(L^2 (\bm{R})) \rightarrow B(S_1)$
with minimal SSD $(\mr{id} , V , L^2 (\bm{R}) )$ for some contraction $V: B(S_1) \rightarrow B(L^2 (\bm{R}))$.   If there exists a CPU map $\Phi : B(S_1) \rightarrow B(S_2)$
such that $\Phi \circ \Phi _1 |_{\vnm} = \mc{P} _2 | _{\vnm}$, then there is a CPTPU map $\Theta : B(L^2 (\bm{R})) \rightarrow B(L^2(\bm{R}))$, such that
$\Theta (m) = m $ for all $m \in \mr{vN} (M)$ so that the effects of $\Theta$ belong to $\mr{vN} (M)$ and $ \mc{P} _2 \circ \Theta = \Phi \circ \Phi _1$. \label{prop:dil}
\end{prop}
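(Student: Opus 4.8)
The plan is to obtain the Proposition as the concrete instance $\mc{A} = B(L^2(\bm{R}))$, $\mc{B} = \vnm$, $\Phi_2 = \mc{P}_2$ of Lemma \ref{lemming:structure}, and then to read off the claimed properties of the map $\Theta = \mc{P}_{\mc{K}_2} \circ \pi'$ defined just before the statement. The substantive work is to verify the Stinespring-dilation hypotheses of Lemma \ref{lemming:structure} in this setting, to identify $\mc{K}_2$ and $\pi_2$, and finally to upgrade $\Theta$ from CPU to CPTPU.

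First I would pin down the minimal dilations. For $\Phi_2 = \mc{P}_2$: since $S_2$ is cyclic (and separating) for $\vnm$, Lemma \ref{lemming:easySSD} gives that the minimal SSD of $\mc{P}_2|_{\vnm}$ is the identity representation of $B(L^2(\bm{R}))$; the minimal SSD of the full map $\mc{P}_2$ is also the identity (its dilation space is $\ov{B(L^2(\bm{R})) S_2} = L^2(\bm{R})$), so $\Phi_2$ and $\Phi_2|_{\mc{B}}$ share the minimal SSD, with $\mc{K}_2 = L^2(\bm{R})$, $V_2$ the inclusion $S_2 \hookrightarrow L^2(\bm{R})$, and $\pi_2 = \id$. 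For $\Phi_1$, the minimal SSD is the identity representation by hypothesis; restricting it to $\vnm$ yields an SSD of $\Phi_1|_{\vnm}$, which I would check is again minimal, i.e. $\ov{\vnm \, V S_1} = L^2(\bm{R})$. This is where the cyclicity of $S_1$ enters: $V S_1$ carries a $\vnm$-cyclic vector of $S_1$, and the $\vnm$-invariant subspace it generates is then all of $L^2(\bm{R})$.

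With both coincidence hypotheses verified, Lemma \ref{lemming:structure} applies to the intertwining $\Phi \circ \Phi_1|_{\vnm} = \mc{P}_2|_{\vnm}$ and produces $\mc{K}_2 = L^2(\bm{R}) \subset \mc{K}'$, reducing for $\pi'|_{\vnm}$, with $\mc{P}_{\mc{K}_2} \circ \pi'|_{\vnm} = \pi_2|_{\vnm} = \id$. I then set $\Theta := \mc{P}_{\mc{K}_2} \circ \pi' : B(L^2(\bm{R})) \to B(\mc{K}_2) = B(L^2(\bm{R}))$. Being a compression of a unital $^*$-representation, $\Theta$ is CP and unital, and $\Theta(m) = \mc{P}_{\mc{K}_2}(\pi'(m)) = \pi_2(m) = m$ for every $m \in \vnm$. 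The intertwining relation then follows by conjugating with the Stinespring isometry: since $\mc{K}_2 \subset \mc{K}'$ and $V_2 = V'$ has range inside $\mc{K}_2$, one gets $\mc{P}_2(\Theta(A)) = V_2^* P_{\mc{K}_2} \pi'(A) P_{\mc{K}_2} V_2 = (V')^* \pi'(A) V' = (\Phi \circ \Phi_1)(A)$, so $\mc{P}_2 \circ \Theta = \Phi \circ \Phi_1$.

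It remains to see that $\Theta$ is trace preserving with effects in $\vnm$. Since $\Theta$ is normal (a compression of the normal representation $\pi'$ of the type I factor $B(L^2(\bm{R}))$), it admits a Kraus form $\Theta(A) = \sum_k E_k^* A E_k$ with $\sum_k E_k^* E_k = \bm{1}$ from unitality. Because $\Theta$ is UCP and fixes $\vnm$ pointwise, $\vnm$ lies in the multiplicative domain of $\Theta$, hence $\vnm$ commutes with every $E_k$ and $E_k^*$; thus $E_k, E_k^* \in \vnm' = \vnm$, the last equality holding because $\vnm = L^\infty(\bm{R})$ is a MASA in $B(L^2(\bm{R}))$. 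This is exactly the assertion that the effects of $\Theta$ belong to $\vnm$. As each $E_k$ is then a normal (multiplication) operator, $E_k E_k^* = E_k^* E_k$, so the unital identity $\sum_k E_k^* E_k = \bm{1}$ coincides with the trace-preservation identity $\sum_k E_k E_k^* = \bm{1}$; therefore $\Theta$ is also trace preserving, i.e. CPTPU. The main obstacle I anticipate is the minimality check in the second paragraph — that restricting the identity dilation of $\Phi_1$ to $\vnm$ stays minimal (equivalently that $V S_1$ is $\vnm$-cyclic) — since this is the hypothesis that licenses the clean application of Lemma \ref{lemming:structure}; the trace-preservation, once the effects are located in the abelian algebra $\vnm$, is then essentially automatic.
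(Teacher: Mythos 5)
Your route is the same as the paper's: feed $\mc{A}=B(L^2(\bm{R}))$, $\mc{B}=\vnm$, $\Phi_2=\mc{P}_2$ into Lemma \ref{lemming:structure}, set $\Theta:=\mc{P}_{\mc{K}_2}\circ\pi'$, show the effects of $\Theta$ lie in $\vnm$, and get trace preservation from the normality of the effects. Two of your steps are welcome expansions of points the paper compresses: the multiplicative-domain argument plus the fact that $\vnm=L^\infty(\bm{R})$ is a MASA is a correct self-contained substitute for the paper's citation of \cite{Beny} when locating the effects in $\vnm$, and your observation that normal effects make the unitality identity coincide with the trace-preservation identity is exactly the content of the paper's remark that a ``hermitian'' channel has unital dual.

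There is, however, one step that does not hold up: the verification that the minimal SSD of $\Phi_1|_{\vnm}$ coincides with that of $\Phi_1$, i.e. that $\ov{\vnm\, V S_1}=L^2(\bm{R})$. Cyclicity of $S_1$ says $\vnm S_1$ is dense, but $V$ is an abstract Stinespring isometry of $S_1$ into $L^2(\bm{R})$ and need not send an a.e.-nonvanishing function to an a.e.-nonvanishing one; nothing in the stated hypotheses prevents, say, $VS_1\subset L^2(0,1)$, in which case $\vnm VS_1$ is not dense, while minimality of the full dilation only requires $B(L^2(\bm{R}))VS_1$ dense, which holds for any nonzero $V$. So the assertion that ``$VS_1$ carries a $\vnm$-cyclic vector'' is unjustified. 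This coincidence of minimal dilations is genuinely an additional hypothesis that Lemma \ref{lemming:structure} demands; the paper's own proof of the Proposition does not verify it either, but supplies it separately in the lemma immediately following, for the concrete map $\Phi_1=\mc{P}_\theta\circ\ad{\sqrt{R}}$ with $V=\sqrt{R}P_\theta$, where density of $\vnm\sqrt{R}P_\theta K^2_\theta$ follows from $L^2(\bm{R})=\sum_k\theta^k K^2_\theta$ and $\sqrt{R}\neq 0$ a.e. You should either add this coincidence as an explicit hypothesis of the Proposition or restrict to a $V$ for which it can be checked; the rest of your argument then goes through as written.
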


Recall here that any completely positive map $\Phi : B(\mc{H} ) \rightarrow B(\mc{H})$ can be expressed as $\Phi (A) = \sum _i E_i A E_i ^* $ where the $E_i$ are contractions in $B (\mc{H})$ and
$\sum E_i E_i ^* \leq \bm{1}$.
If $\Phi$ is unital then it follows that $\sum E_i E_i ^* = \bm{1}$. These operators are called the effects of $\Phi $, or sometimes the Kraus operators of $\Phi$ and we write $\Phi \equiv \{ E_i \}$.
The set of effects of $\Phi$ is not unique, but two different sets of effects for $\Phi$ are related as described in Lemma \ref{lemming:effects} below.

\begin{proof}
Let $\mc{H} := L^2 (\bm{R})$.
We apply Lemma \ref{lemming:structure} with $\Phi _2 = \mc{P} _2$. By Lemma \ref{lemming:easySSD} the minimal SSD of $\mc{P} _2$ is $(\mr{id}, P_2 , L^2 (\bm{R} ))$. By Lemma \ref{lemming:structure}, there is a $*$-isomorphism $\pi : B(L^2 (\bm{R} )) \rightarrow B(L^2 (\bm{R} )) $ such that $\pi \circ \mr{id} = \pi '$, where $\pi '$ is the minimal SSD of $\Phi \circ \Phi _1$, and  $\pi |  _{\vnm} = \pi \circ \id | _{\vnm} = \id | _{\vnm}$.
Hence $\Theta := \mc{P} _{L^2 (\bm{R})} \circ \pi ' = \mc{P} _{L^2 (\bm{R})} \circ \pi \circ \mr{id} $ is a CPU map ($\pi _1 $ is the identity map)
$\Theta : B(L^2 (\bm{R} )) \rightarrow B(L^2 (\bm{R}))$ and we have that $\Theta | _{\vnm} = \pi _2 | _{\mr{vN} (M)} = \mr{id} | _{\mr{vN} (M)}$.

 In other words $\Theta (m) = m$ for all $m \in \vnm$ and hence if $\{ E_i \}$ are the effects of $\Theta$, then the $E_i $ commute with spectral projections of $M$ and must belong to $\vnm$ (this is not hard to show,
see \cite[pgs 7-8]{Beny}). In particular the effects of $\Theta$ are normal operators. Such a CP map is called hermitian.  Given a completely positive map $\Phi$ on $B(\mc{H})$, one can define its dual $\Phi ^\dag : T (\mc{H} ) \rightarrow T (\mc{H} )$,
with respect to the canonical trace on $B(\mc{H})$ by $\Phi ^\dag (T) \in T(\mc{H})$ is the unique trace-class operator obeying $\mr{Tr} (T \Phi (A) ) = \mr{Tr} ( \Phi ^\dag (T) A )$ for all
 $A \in B (\mc{H})$. Here $T (\mc{H})$ denotes the trace-class operators.  It is easy to show that $\Phi $ is unital if and only if $\Phi ^\dag$ is trace-preserving, and vice versa.
Since $\Theta$ is hermitian, it follows that $\Theta ^\dag$ is also unital. It follows that $\Theta$ is trace-preserving
and unital, hence $\Theta$ is a CPTPU map, \emph{i.e.} a quantum channel of $B(L^2 (\bm{R}))$.

Now \be \mc{P} _2 \circ \Theta = \mc{P} _2 \circ \pi \circ \pi _1 = \mc{P} _2 \circ \pi ' = \Phi \circ \Phi _1 , \ee and this completes the proof.

\end{proof}

We will need the following fact which relates two different sets of effects which define the same CP map acting on $B(\mc{H})$ when $\mc{H}$ is separable.

\begin{lemming}
Let $\Phi : B(\mc{H} ) \rightarrow B(\mc{H})$ be a normal CPU map and let $(E_i) _{i=1} ^k $ and $(F_j) _{j=1} ^l$ be two sets of effects for $\Phi$. Then there is an isometry $U : l^2 _k (\mc{H} ) \rightarrow l^2 _l (\mc{H})$
whose entries are scalars multiplied by the identity in $\mc{H}$ such that $U \, (E _i ^* ) = (F _j ^*)$ where here $(E_i ^*)$ denotes the column vector  with entries $E_i ^*$. In particular the two sets of effects have
the same linear span. \label{lemming:effects}
\end{lemming}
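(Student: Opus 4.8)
The plan is to realise each set of effects as a Stinespring dilation built on an ampliation of $B(\mc{H})$, and then to extract the desired isometry from the essential uniqueness of the minimal dilation together with the fact that the commutant of the ampliated algebra consists exactly of operators of scalar-block form.

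First I would build the two dilations explicitly. Writing $\Phi (A) = \sum _i E_i A E_i ^*$ and $\Phi (A) = \sum _j F_j A F_j ^*$, and padding the shorter list of effects with zero operators so that both are indexed by a common set of size $n = \max (k,l)$, I define $V_E , V_F : \mc{H} \rightarrow \mc{H} \otimes \bm{C} ^n$ by $V_E \xi := \sum _i (E_i ^* \xi ) \otimes e_i$ and $V_F \xi := \sum _j (F_j ^* \xi ) \otimes e_j$, and set $\pi (A) := A \otimes \bm{1}$. A direct computation gives $V_E ^* \pi (A) V_E = \Phi (A) = V_F ^* \pi (A) V_F$ for all $A$, and since $\Phi$ is unital, $V_E ^* V_E = V_F ^* V_F = \Phi (\bm{1}) = \bm{1}$, so that $V_E , V_F$ are isometries and $(\pi , V_E , \mc{H} \otimes \bm{C} ^n )$, $(\pi , V_F , \mc{H} \otimes \bm{C} ^n )$ are both Stinespring dilations of $\Phi$.

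Next I would pass to the minimal dilations sitting inside these. Let $\mc{K} _E := \ov{\pi (B(\mc{H} )) V_E \mc{H}}$ and $\mc{K} _F := \ov{\pi (B(\mc{H} )) V_F \mc{H}}$ be the reducing subspaces generated by $\ran{V_E}$ and $\ran{V_F}$; each carries a minimal Stinespring dilation of $\Phi$. The structural observation is that, because $B(\mc{H} ) ' = \bm{C} \bm{1}$, the commutant of $\pi (B(\mc{H} )) = B(\mc{H} ) \otimes \bm{1}$ inside $B(\mc{H} \otimes \bm{C} ^n )$ is exactly $\bm{1} \otimes B(\bm{C} ^n )$, i.e. the operators of scalar-block form. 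Hence every reducing subspace for $\pi$ has the form $\mc{H} \otimes W$ for a subspace $W \subseteq \bm{C} ^n$; in particular $\mc{K} _E = \mc{H} \otimes W_E$ and $\mc{K} _F = \mc{H} \otimes W_F$. By uniqueness of the minimal Stinespring dilation there is a unitary $w : \mc{K} _E \rightarrow \mc{K} _F$ with $w V_E = V_F$ intertwining the restrictions of $\pi$; since $w$ then intertwines $A \otimes \bm{1} _{W_E}$ and $A \otimes \bm{1} _{W_F}$ for every $A$, the same commutant computation forces $w = \bm{1} _{\mc{H}} \otimes u_0$ for a scalar unitary $u_0 : W_E \rightarrow W_F$.

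Finally I would globalise and read off the coefficients. Because $u_0$ is a unitary between $W_E$ and $W_F$ their dimensions agree, so their orthogonal complements in $\bm{C} ^n$ have equal dimension and $u_0$ extends to a unitary $\hat{u} _0$ of $\bm{C} ^n$; set $U := \bm{1} _{\mc{H}} \otimes \hat{u} _0$, an operator of scalar-block form with $U V_E = V_F$ (as $\ran{V_E} \subseteq \mc{K} _E$, where $U$ agrees with $w$). Comparing the $j$-th components of $U V_E \xi = V_F \xi$ yields $F_j ^* = \sum _i (\hat{u} _0 ) _{ji} E_i ^*$, which is exactly $U (E_i ^*) = (F_j ^*)$ for the scalar matrix $U = ( (\hat{u} _0 ) _{ji} )$; discarding the padded columns returns the stated isometry $l^2 _k (\mc{H} ) \rightarrow l^2 _l (\mc{H} )$, and invertibility of $\hat{u} _0$ gives the equal-span conclusion. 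I expect the middle step to be the main obstacle: recognising the intertwiner of the two minimal dilations as an operator of scalar-block form, which rests on the commutant identity $( B(\mc{H} ) \otimes \bm{1} ) ' = \bm{1} \otimes B(\bm{C} ^n )$ and on matching the multiplicities of the complementary pieces so that the partial intertwiner extends to a global unitary of the required form.
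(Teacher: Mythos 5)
Your argument is correct and follows essentially the same route as the paper's: realise each set of effects as a Stinespring dilation $V h = (E_1^* h, E_2^* h, \dots)$ of the ampliation $\pi(A) = A \otimes \bm{1}$, cut down to the minimal dilation, invoke its uniqueness up to a unitary intertwiner, and observe that any intertwiner of ampliations of $B(\mc{H})$ has scalar-block entries because the commutant of $B(\mc{H}) \otimes \bm{1}$ is $\bm{1} \otimes B(l^2_n)$. The only cosmetic differences are that you pad both lists to a common length and build both (non-minimal) dilations on one ambient space before cutting down, whereas the paper starts from the abstract minimal dilation via the representation theory of the type I factor $B(\mc{H})$; your closing step of extending $u_0$ to a global isometry is the same loose end the paper leaves implicit and does not affect the equal-span conclusion that is actually used later.
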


\begin{proof}
    In finite dimensions this is well-known to experts in quantum error correction, and the proof for the separable case is virtually identical. Here we sketch the proof.

    Let $(\mc{K} , V , \pi)$ denote the minimal SSD of $\Phi$ so that $V: \mc{H} \rightarrow \mc{K}$ is an isometry such that $V \pi (A) V^* = \Phi (A)$.  Since $\Phi$ is normal it follows
that $\pi$ is normal. Also since $\pi$ is a minimal SSD of $\Phi$, it is an irreducible normal representation of the type I factor $B(\mc{H})$.

It follows from the representation theory of factors of type I that we can assume that $\mc{K} = l^2 _k (\mc{H} ) \simeq \mc{H} \otimes l^2 _k $ for some $k \in \bm{N} \cup \{ \infty \}$ where $l^2 _k$ is the Hilbert space of square summable sequences of length
$k$, and that $\pi (A) = A \otimes \bm{1} $. Since $V: \mc{H} \rightarrow l^2 _k (\mc{H})$ we can define $E_k ^*  : \mc{H} \rightarrow \mc{H}$ by choosing $E_k ^* h = h_k$ where $Vh = ( h_1 , h_2 , ...)$.
The $(E_k)$ are a set of effects for $\Phi$, \emph{i.e.} $ \Phi (A) = \sum _k E_k A E_k ^* $, $\| E_k \| \leq 1 $ and $\sum _k E_k E_k ^* = \bm{1}$.

Now suppose that $(F_j)_{j=1} ^n$ are another set of effects for $\Phi$. Then we can construct a SSD of $\Phi$ by letting $\pi ' (A) = A \otimes \bm{1}$ on $l^2 _n (\mc{H})=: \mc{K} '$
and defining $V ' : \mc{H} \rightarrow \mc{K} ' $ by $V' h = (F_1 ^* h , F_2 ^* h , ...)$.  Now $(\mc{K} ' , V' , \pi ')$ contains a minimal SSD $(\mc{K} _2 , V' , \pi _2)$ (when constructing the minimal SSD
from an arbitrary SSD, this does not change the isometry $V'$, this can be observed from \cite[pg. 46]{Paulsen}) such that $\pi ' (B (\mc{H})) V' \mc{H} = \mc{K} _2$.

By the uniqueness of the minimal SSD, there is a unitary operator $U: \mc{K} = l^2 _j (\mc{H}) \rightarrow \mc{K} _2 \subset l^2 _n (\mc{H})$ such that $\mr{Ad} _U \circ \pi = \pi _2$ and $UV = V'$.
The first equation implies that if we write $U$ as an $n \times j$ matrix with entries in $B(\mc{H})$, then each entry $U_{ik}$ belongs to the commutant of $B(\mc{H} )$ and hence must be a scalar times
the identity. The second equation tells us that this scalar matrix multiplying the column vector $(E_i ^*)$ equals the column vector $(F_j ^*)$.  In particular the $(E_i)$ and $(F_i)$ have the same linear span.
\end{proof}

To apply the result of the previous proposition to the situation of the previous section, equation (\ref{eq:cpeq}), we will need one final lemma:

\begin{lemming}
    Consider $\Phi _1 := \mc{P} _\theta \circ \ad{\sqrt{R}} : B(L^2 (\bm{R} )) \rightarrow B (K^2 _\theta)$. Then the minimal SSD's of both $\Phi _1 $ and $\Phi _1 | _{\vnm}$ are both equal to $(\mr{id} ,  \sqrt{R} P _\theta , L^2 (\bm{R}) )$, where $\mr{id}$ denotes the identity isomorphism.
\end{lemming}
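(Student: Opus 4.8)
The plan is to verify directly that the triple $(\mr{id}, \sqrt{R}P_\theta, L^2 (\bm{R}))$ is a Stinespring dilation of $\Phi _1$ and then to establish its minimality, handling the full map and its restriction to $\vnm$ together since the candidate dilation is the same for both. Since $\sqrt{R}$ is a positive (hence self-adjoint) function of $M$, we have $\ad{\sqrt{R}} (A) = \sqrt{R}\, A\, \sqrt{R}$, so that
\be \Phi _1 (A) = P_\theta \sqrt{R}\, A\, \sqrt{R}\, P_\theta = (\sqrt{R} P_\theta )^* A (\sqrt{R} P_\theta ) \ee
for every $A \in B(L^2 (\bm{R}))$. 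Writing $W := \sqrt{R} P_\theta$ (to avoid clashing with the intertwiner $V$ of the previous section) and taking $\pi = \mr{id}$ on $B(L^2 (\bm{R}))$, this is precisely the Stinespring identity $\Phi _1 (A) = W^* \pi (A) W$.

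Next I would record that $W$ is an isometry of $K^2 _\theta$ into $L^2 (\bm{R})$: by equation (\ref{eq:partiso}) it is a partial isometry with initial space $K^2 _\theta$, and equivalently, since $\Phi _1$ is unital, $W^* W = \Phi _1 (\bm{1}) = \bm{1} _{K^2 _\theta}$. The identical computation with $A = m \in \vnm$ shows that $(\mr{id}, W, L^2 (\bm{R}))$ is also a Stinespring dilation of $\Phi _1 | _{\vnm}$, where now $\mr{id}$ denotes the inclusion representation of $\vnm$ on $L^2 (\bm{R})$. It then remains to check minimality in each case, i.e. that $\ov{\mr{span}} \{ \pi (A) W h : A \in \mc{A}, \ h \in K^2 _\theta \} = L^2 (\bm{R})$ for the appropriate algebra $\mc{A}$. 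For the full map this is immediate: $\pi = \mr{id}$ is the identity representation of $B(L^2 (\bm{R}))$, which acts irreducibly, so any single nonzero vector $W h$ is cyclic and the closed span is all of $L^2 (\bm{R})$.

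The hard part is minimality for $\Phi _1 | _{\vnm}$, where $\vnm$ is abelian and no longer acts irreducibly; here I must show that $\{ m\, \sqrt{R}\, h : m \in \vnm, \ h \in K^2 _\theta \}$ spans a dense subspace, equivalently (since $P_\theta$ acts as the identity on $K^2 _\theta$) that $\vnm\, \sqrt{R}\, K^2 _\theta$ is dense in $L^2 (\bm{R})$. To do this I would exhibit a single element of $K^2 _\theta$ that is nonzero almost everywhere and then invoke cyclicity of the multiplication algebra. For $w \in \bm{U}$ the reproducing kernel $k_w \in K^2 _\theta$ is proportional to $\frac{1 - \ov{\theta (w)} \theta (z)}{z - \ov{w}}$, and since $|\theta (w)| < 1$ while $|\theta | = 1$ a.e. on $\bm{R}$, the numerator $1 - \ov{\theta (w)} \theta (x)$ is nonzero for a.e. $x \in \bm{R}$; hence $k_w$ is nonzero a.e. Because $R > 0$ a.e. by Remark \ref{subsubsection:cyclic}, the vector $g := \sqrt{R}\, k_w = W k_w$ is likewise nonzero a.e.

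For an abelian multiplication algebra a vector is cyclic precisely when it is nonzero almost everywhere, so $\ov{\vnm\, g} = L^2 (\bm{R})$, which forces the closed span above to be all of $L^2 (\bm{R})$ and gives minimality of $(\mr{id}, W, L^2 (\bm{R}))$ as a dilation of $\Phi _1 | _{\vnm}$. Hence both minimal Stinespring dilations coincide with $(\mr{id}, \sqrt{R} P_\theta , L^2 (\bm{R}))$, as claimed. The only genuinely nontrivial point is this last one: the cyclicity of $K^2 _\theta$ for $\vnm$ (equivalently the existence of an a.e.-nonzero model-space function) combined with the strict positivity $R > 0$ a.e. is exactly what upgrades minimality from the trivial irreducible case to the abelian algebra $\vnm$.
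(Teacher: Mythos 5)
Your proof is correct and follows the same overall scheme as the paper's: verify the Stinespring identity $\Phi_1(A) = (\sqrt{R}P_\theta)^*A(\sqrt{R}P_\theta)$ with $\pi = \mr{id}$, observe that minimality over all of $B(L^2(\bm{R}))$ is automatic by irreducibility, and reduce the remaining claim to the density of $\vnm \sqrt{R}\,K^2_\theta$ in $L^2(\bm{R})$. The only place you genuinely diverge is in how that density is established. The paper invokes the orthogonal decomposition $L^2(\bm{R}) = \sum_{k\in\bm{Z}}\theta^k K^2_\theta$ together with $\sqrt{R}\neq 0$ a.e., which leaves implicit how multiplication by $\sqrt{R}$ interacts with the $\theta^k$ summands; you instead exhibit a single a.e.-nonvanishing element of $K^2_\theta$ (a reproducing kernel $k_w$, $w\in\bm{U}$, whose boundary values are proportional to $\frac{1-\ov{\theta(w)}\theta(x)}{x-\ov{w}}$ and cannot vanish on a set of positive measure since $|\theta(w)|<1$ while $|\theta|=1$ a.e.) and then apply the standard fact that an a.e.-nonvanishing vector is cyclic for the multiplication algebra $\vnm$. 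Your route is slightly more self-contained and is cleaner at exactly the point where the paper is terse; both arguments rest on the same essential input, namely $R>0$ a.e.\ from Remark \ref{subsubsection:cyclic}, and both yield the stated conclusion.
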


\begin{proof}
   Recall that $V= \sqrt{R} P _\theta : K^2 _\theta \rightarrow L^2 (\bm{R})$ is an isometry. For any $A \in B (L^2 (\bm{R} )$, we have that $V^* \mr{id} (A) V = \mc{P} _\theta \circ \ad{\sqrt{R}} (A) = \Phi _1 (A)$, this shows that $\mr{id}$ is a SSD of $\Phi _1$, and hence
of $\Phi _1 | _{\vnm}$.  To show that this is minimal we need to show that both $B(L^2 (\bm{R} ) ) V K^2 _\theta$ and $\vnm V K^2 _\theta$ are dense in $L^2 (\bm{R} )$. Clearly the
first set is dense in $L^2 (\bm{R})$.  Now it is not difficult to show that $L^2 (\bm{R} ) = \sum _{k \in \bm{Z} } \theta ^k K^2 _\theta$. Since $\sqrt{R}$ is non-zero almost everywhere with respect
to Lebesgue measure, it follows that $\vnm  V K^2 _\theta $ is dense in $L^2 (\bm{R} )$.
\end{proof}

This next corollary is the main result of this paper:

\begin{cor}
If $S, K^2 _\theta$ are the subspaces of the previous section, then $S = u h K^2 _{\theta ' } $ is nearly invariant, where $u$ is unimodular,  $\theta ' := \frac{\theta (i) - \theta }{1- \ov{\theta(i)} \theta}$ is such that $\theta ' (i) =0$, and $h$ is an isometric multiplier of $K^2 _{\theta ' }$ onto $\ov{u}S$ (so that $\frac{h}{z+i} \in \ov{u}S$), and $\theta ' $ is the Lisvic characteristic function of $M_S$. \label{cor:main}
\end{cor}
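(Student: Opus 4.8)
The plan is to feed the completely positive maps of equation~(\ref{eq:cpeq}) into Proposition~\ref{prop:dil} and then extract the conclusion from the rigidity of the Kraus decomposition in Lemma~\ref{lemming:effects}. First I would set $S_1 := K^2 _\theta$ and $S_2 := S$; both are cyclic and separating for $\vnm$, the former because the reproducing kernels of $K^2 _\theta$ are nonzero almost everywhere and the latter by the cyclicity remark~\ref{subsubsection:cyclic}. The map $\Phi _1 = \mc{P} _\theta \circ \ad{\sqrt{R}}$ is CPU with minimal SSD $(\id , \sqrt{R} P _\theta , L^2 (\bm{R}))$ by the immediately preceding lemma, and $\Phi := \ad{V_0 ^*} : B(K^2 _\theta) \rightarrow B(S)$ is CPU since $V_0 ^* V_0 = P_S$. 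With these identifications equation~(\ref{eq:cpeq}) is exactly the hypothesis $\Phi \circ \Phi _1 | _{\vnm} = \mc{P} _S | _{\vnm}$, so Proposition~\ref{prop:dil} yields a quantum channel $\Theta : B(L^2 (\bm{R})) \rightarrow B(L^2 (\bm{R}))$ whose effects $\{ E_i \}$ lie in $\vnm$, hence are multiplication operators $E_i = M_{e_i}$ with $e_i \in L^\infty$, and which satisfies $\mc{P} _S \circ \Theta = \Phi \circ \Phi _1$.

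Next I would compare the two sides of this identity through their effects. Writing $W := \sqrt{R} P _\theta V_0 = \sqrt{R} V_0$, the right-hand side equals $\ad{W^*}$, a completely positive map with the single effect $W^* = V_0 ^* \sqrt{R}$; here $W^* W = V_0 ^* P _\theta R P _\theta V_0 = V_0 ^* V_0 = P_S$ by equation~(\ref{eq:partiso}), consistent with unitality. The left-hand side $\mc{P} _S \circ \Theta$ has effects $\{ P_S E_i \} = \{ P_S M_{e_i} \}$. Since both maps are normal and equal, the argument of Lemma~\ref{lemming:effects} forces the two effect families to have the same linear span; as one of them is the singleton $\{ W^* \}$, this span is one dimensional and each $P_S M_{e_i}$ is a scalar multiple of $W^*$. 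Forming the appropriate linear combination (and using $\sum _i E_i E_i ^* = \bm{1}$) I obtain $V_0 ^* \sqrt{R} = P_S M_e$ for a single $e \in L^\infty$ with $|e| \leq 1$ almost everywhere. Because $R > 0$ almost everywhere (again by the remark~\ref{subsubsection:cyclic}) I may divide by $\sqrt{R}$ to conclude $V_0 ^* = P_S M_{\ov{v}}$, where $\ov{v} := e / \sqrt{R}$ is affiliated with $\vnm$: the partial isometry $V_0 ^*$ is multiplication by $\ov{v}$ followed by the projection onto $S$.

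It remains to pass from $V_0 ^* = P_S M_{\ov{v}}$ to near invariance. Taking adjoints gives $V = P _\theta M_v | _S$, and since $V_0 ^*$ is a surjective isometry of $K^2 _\theta$ onto $S$ that intertwines multiplication by the independent variable on $K^2 _\theta$ with $M_S$ on $S$, a short argument shows the projection is inessential, so that $M_{\ov{v}}$ maps $K^2 _\theta$ into $S$ and $S = \ov{v} K^2 _\theta$ with $\ov{v}$ an isometric multiplier of $K^2 _\theta$ into $L^2 (\bm{R})$. I would then bring this into the Hitt--Sarason normal form: factoring off a unimodular function $u$ and applying the Crofoot transform $\ov{v} \mapsto \ov{v} \, (1 - \ov{\theta (i)} \theta) / \sqrt{1 - |\theta (i)| ^2}$ replaces $\theta$ by its Frostman shift $\theta ' = \frac{\theta (i) - \theta}{1 - \ov{\theta(i)} \theta}$, which satisfies $\theta ' (i) = 0$ and has the same model space up to the bounded multiplier just inserted. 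Absorbing everything but $u$ into a single function $h$ exhibits $\ov{u} S = h K^2 _{\theta '}$ with $h$ an isometric multiplier of $K^2 _{\theta '}$ into $H^2$ (so that $\frac{h}{z+i} \in \ov{u} S$ by the computation of the introductory subsection), which is precisely the statement that $S = u h K^2 _{\theta '}$ is nearly invariant. That $\theta '$ is the Lisvic characteristic function of $M_S$ follows as in the introduction: conjugating $M_{\theta '}$ by an isometric multiplier does not change the characteristic function once the gauge $u \in \ker{M_S ^* - i}$ is normalized at $i$.

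The decisive step is the middle one. A priori $V_0 ^*$ is merely a partial isometry with no reason to interact with the multiplicative structure of $L^2 (\bm{R})$; it is the dilation-theoretic input --- Proposition~\ref{prop:dil} producing a channel $\Theta$ whose effects lie inside the maximal abelian algebra $\vnm$, combined with the fortunate fact that $\Phi \circ \Phi _1$ has a single Kraus operator, so that Lemma~\ref{lemming:effects} collapses the span to a line --- that forces $V_0 ^*$ to be a multiplication operator. I expect the concluding translation of the third paragraph to be routine in spirit but technically delicate, the two points requiring care being the removal of the projection $P_S$ (equivalently, verifying that $\ov{v}$ is a genuine isometric multiplier) and the bookkeeping of the Frostman shift that matches $\theta '$ with the characteristic function of $M_S$.
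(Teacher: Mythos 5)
Your overall architecture coincides with the paper's: feed equation (\ref{eq:cpeq}) into Proposition \ref{prop:dil}, use Lemma \ref{lemming:effects} to collapse the effects of the channel $\Theta$ onto the single Kraus operator of $\Phi \circ \Phi _1$, conclude that $V_0$ is implemented by a function affiliated with $\vnm$, and finish with the Crofoot transform and the Beurling--Nevanlinna factorization. Your first two paragraphs are essentially the paper's argument (the paper phrases the span comparison for the trace duals $\Theta ^\dagger \circ \mc{P} _2 = \Phi _1 ^\dagger \circ \Phi ^\dagger$ and additionally cancels $P_2$, using that $S$ is separating, to get $\Theta = \ad{U}$ with $U$ unimodular; this is the same computation).

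The third paragraph, however, contains a step that fails. From $V_0 ^* = P_S M_{\ov{v}}$ with $\ov{v} = e / \sqrt{R}$ you assert that ``the projection is inessential'' and hence that $S = \ov{v} K^2 _\theta$. This conflates the adjoint of a multiplication isometry with its inverse. What the identity $\sqrt{R} V_0 = U P_2$ actually gives is that $V = V_0 | _S$ is multiplication by $v = U / \sqrt{R}$, an isometry of $S$ onto $K^2 _\theta$; the map carrying $K^2 _\theta$ onto $S$ is therefore its inverse, multiplication by $1/v = \ov{U} \sqrt{R}$, and not multiplication by $\ov{v} = \ov{U} / \sqrt{R}$. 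These two coincide only when $R \equiv 1$, i.e.\ in the seminvariant case of Corollary \ref{cor:unital}. In general $M_{\ov{v}}$ does not map $K^2 _\theta$ into $S$ at all: the correct operator identity is $P_S M_{\ov{v}} | _{K^2 _\theta} = M_{1/v} | _{K^2 _\theta}$, so the projection is doing genuine work and dropping it changes the multiplier by a factor of $R$. Concretely, equation (\ref{eq:partiso}) shows $\intfty R(x) |g(x)|^2 dx = \| g \| ^2$ for $g \in K^2 _\theta$, which is exactly what makes $\ov{U} \sqrt{R}$ an isometric multiplier of $K^2 _\theta$ onto $S$; there is no corresponding identity for $1/R$, so $\ov{U}/\sqrt{R}$ is not even an isometric multiplier on $K^2_\theta$ in general. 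The repair is immediate and is what the paper does: invert $v$ rather than conjugate it, after which your Crofoot/Frostman and factorization steps go through verbatim with $g = h' \ov{U} \sqrt{R}$ in place of $h' \ov{v}$.
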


\begin{proof}
  Let $S_1 := K^2 _\theta$, $S_2 = S$, with projectors $P_i$. Let $\Phi _1 := \mc{P} _1 \circ \ad{\sqrt{R}} $, $\Phi _2 = \mc{P} _2 $ and $\Phi = \ad{V_0 ^*}$. Then by equation (\ref{eq:cpeq}) of the previous
section, the previous lemma, and Remark \ref{subsubsection:cyclic}, it follows that the conditions of Proposition \ref{prop:dil} are satisfied, so that there is a quantum channel $\Theta $ on $B(L^2 (\bm{R}))$ with effects $\{E_i \} \subset \vnm$ and
$\mc{P} _2 \circ \Theta = \Phi \circ \Phi _1$. Taking adjoints yields $\Theta ^\dagger \circ \mc{P} _2 = \Phi _1 ^\dagger \circ \Phi ^\dagger$. Hence both $\{ E_i ^* P_2 \}$ and
$\{ \sqrt{R} P_1 V_0 \}$ are sets of effects for the same map, and so by Lemma \ref{lemming:effects}, they must have the same linear span. This shows that for any $i$, there is an $\alpha _i \in \bm{C}$
so that $E_i ^* P_2 = \alpha _i \sqrt{R} P_1 V_0 P_2 $ (recall $V_0 : S_2 \rightarrow S_1$ is a partial isometry). Hence,
\be \left( E_i ^* - \frac{\alpha _i}{\alpha _1} E_1 ^* \right) P_2 = 0 , \ee
 and since $S = S_2$ is cyclic and separating for $\vnm$, we conclude that $E_i ^* = \frac{\alpha _i}{\alpha _1} E_1 ^*$.
Since $\Theta $ is unital, we have $1 = \sum  |c_i | ^2 | |E_1 (x) | ^2 =: k^2 |E_1 (x) | ^2$. This shows that $U := k E_1$ is a unimodular function such that $\Theta = \ad{U}$, so that $\Theta$
is actually a $*$-isomorphism. Now $\{ U P_2 \}$ and $\{ \sqrt{R} P_1 V_0 \}$ have the same linear span, and there is an $\alpha \in \bm{C}$ so that
\be \alpha U P_2 = \sqrt{R} P_1 V_0 = \sqrt{R} V_0.\ee Hence $V_0 = \frac{\alpha U}{\sqrt{R}} P_2 $. Actually, since $\frac{U}{ \sqrt{R}} P_2$ and $V_0$ are both partial isometries, it follows that we can take $\alpha =1$.
This shows that multiplication by the function $  U  / \sqrt{R}$ is an isometry from $S$ onto $K^2 _\theta$.
Hence multiplication by $ \ov{U} \sqrt{R}$ is an isometry from $K^2 _\theta $ onto $S$. Also by known results there is a function $h$ such that multiplication by $h$ is an isometry from $K^2 _{\theta '}$ onto
$K^2 _\theta$, this mapping is called a Crofoot transform \cite[Section 13]{Sarason-alg}. It follows that if $g :=  h' \ov{U} \sqrt{R}$, that multiplication by $g$ is an isometry from $K^2 _{\theta '}$ onto $S$.
Since $\theta' (i) =0$, $k_i (z) = \frac{i}{2\pi} \frac{1}{z+i}$ is the point evaluation vector at $i$ in $K^2 _{\theta '}$, it follows that $\frac{g}{z+i} \in L^2 (\bm{R})$. It follows that $S = g K^2 _{\theta '}$ is nearly invariant, and if $ u h $ is the Beurling-Nevanlinna factorization of $\frac{g}{z+i}$, $h \in H^2$, $u$ is unimodular, that $S' = \ov{u} S $ is a nearly invariant subspace of $H^2$ such that $S' = h (z+i) K^2 _{\theta ' }$.
Since $M_S$ is unitarily equivalent to $M_{\theta '}$, it follows that the characteristic function of $M_S$ is $\theta '$.

\end{proof}

\begin{cor}
    If $R=1$, then $S$ is seminvariant. \label{cor:unital}
\end{cor}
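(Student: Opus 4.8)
The plan is to specialize the computation carried out in the proof of Corollary~\ref{cor:main} to the case $R = 1$. Recall that there, after applying Lemma~\ref{lemming:effects} to the two sets of effects $\{E_i ^* P_2\}$ and $\{\sqrt{R} P_1 V_0\}$, one obtains the relation $\alpha U P_2 = \sqrt{R}\, P_1 V_0 = \sqrt{R}\, V_0$ for the unimodular function $U = k E_1$ and some scalar $\alpha$ (here $P_1 V_0 = V_0$ since $V_0$ already maps into $S_1 = K^2 _\theta$). The whole point of the hypothesis $R=1$ is that it collapses $\sqrt{R}$ to the constant function $1$, so multiplication by $\sqrt{R}$ becomes the identity and the argument simplifies considerably: there is no longer any nontrivial positive multiplier to absorb, and in particular the Beurling--Nevanlinna factorization, the Crofoot transform, and the passage to $\theta'$ with $\theta'(i)=0$ are all unnecessary.

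Concretely, with $\sqrt{R} = 1$ the relation becomes $\alpha U P_2 = V_0$. Since $U P_2$ and $V_0$ are both partial isometries with initial space $S = S_2$, I would argue exactly as in Corollary~\ref{cor:main} that we may take $\alpha = 1$, giving $V_0 = U P_2$. Thus multiplication by the unimodular function $U$ restricts on $S$ to the partial isometry $V_0 : S \rightarrow K^2 _\theta$ with final space $K^2 _\theta$. Because $U$ is unimodular, multiplication by $U$ is in fact unitary on all of $L^2 (\bm{R})$, so it carries $S$ isometrically \emph{onto} $K^2 _\theta$, and hence $S = \ov{U}\, K^2 _\theta$.

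It then remains only to recognize this identity as seminvariance. The model subspace $K^2 _\theta = H^2 \ominus \theta H^2$ is the orthogonal difference of the two shift-invariant subspaces $\theta H^2 \subset H^2$, and is therefore a seminvariant subspace of $H^2 (\bm{U})$ in the sense of Sarason. By the definition of seminvariance for subspaces of $L^2 (\bm{R})$ --- a unimodular multiple of a seminvariant subspace of $H^2$, in exact analogy with the definition of near invariance used throughout the paper --- the identity $S = \ov{U}\, K^2 _\theta$ exhibits $S$ as seminvariant, which is the claim.

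I do not expect a genuine analytic obstacle here, precisely because the hypothesis $R=1$ removes the positive multiplier $\sqrt{R}$ that forced the more elaborate analysis in Corollary~\ref{cor:main}; the entire content is a one-line specialization of that proof. The only point requiring care is bookkeeping: stating the definition of seminvariance in the $L^2 (\bm{R})$ setting cleanly and confirming that $K^2 _\theta$ qualifies as a seminvariant subspace of $H^2$. Once $S = \ov{U}\, K^2 _\theta$ is established, the conclusion is immediate.
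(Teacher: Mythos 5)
Your proposal is correct and is essentially the paper's own argument: with $R=1$ the relation from Corollary \ref{cor:main} collapses to $UP_2 = V_0$, hence $S = \ov{U} K^2_\theta$ with $U$ unimodular, and the characterization of seminvariant subspaces of $L^2(\bm{R})$ as unimodular multiples of model spaces (stated in the paper right after the corollary, via Beurling--Lax) finishes it. No gaps.
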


Here $S \subset L^2 (\bm{R})$ is called seminvariant if it is seminvariant for the shift (multiplication by $\mu (x) = \frac{x-i}{x+i}$). Recall that
a subspace is seminvariant for an operator if it is the direct difference of two invariant subpsaces, one of which contains the other. A subspace
is seminvariant for the shift if and only if $S = u K^2 _\theta$ where $u$ is unimodular and $\theta$ is an inner function.  This follows
from the Beurling-Lax theorem, see for example the proof of \cite[Theorem 5.2.2]{Martin-dB}.

\begin{proof}
    Suppose that $R=1$. In this case $UP_2 = V_0$ (we can assume $\alpha =1$), so that $U^* P_1 = V_0 ^*$ and $S = S_2 = U^* K^2 _\theta$ where $U^* \in \vnm$ is unitary.

\end{proof}

It seems possible that the converse to the above corollary is also true.

\begin{cor}
    If $S \subset L^2 (\bm{R} )$ is such that $M$ has a restriction $M_S \in \symr{S}$, then $S$ is a reproducing kernel Hilbert space with a $\bm{T}$-parameter family of total orthogonal sets of point evaluation vectors.
\end{cor}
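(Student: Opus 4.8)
The plan is to derive this statement directly from the main result, Corollary \ref{cor:main}, together with the known reproducing kernel structure of model subspaces for meromorphic inner functions. First I would invoke Corollary \ref{cor:main}: since $M$ has a restriction $M_S \in \symr{S}$, that corollary produces a unimodular $u$, a meromorphic inner function $\theta '$ with $\theta ' (i) = 0$, and an isometric multiplier $h$ of $K^2 _{\theta '}$ such that $S = u h K^2 _{\theta '}$ is nearly invariant. In particular, multiplication by $w := uh$ is an isometry $W$ of $K^2 _{\theta '}$ onto $S$.

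Next I would recall from \cite{Martin-dB, Martin-symsamp} that for \emph{meromorphic} inner $\theta '$ the model space $K^2 _{\theta '}$ is a reproducing kernel Hilbert space of functions on $\bm{R}$ (its elements extend meromorphically to $\bm{C}$), and that it carries a $\bm{T}$-parameter family of total orthogonal sets of point evaluation vectors: for each $\alpha \in \bm{T}$ the reproducing kernels $\{ k_t \}$ indexed by the discrete real solutions of $\theta ' (t) = \alpha$ form a total orthogonal set in $K^2 _{\theta '}$. These are the Clark/sampling bases associated with the self-adjoint extensions of $M_{\theta '}$.

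The core of the argument is then to transport this structure through $W$. For a multiplication operator $W \colon f \mapsto w f$ with $w$ nonzero almost everywhere, which is an isometry of a reproducing kernel Hilbert space $H_1$ onto $H_2 \subset L^2 (\bm{R})$, point evaluation at any $\lambda$ where $w(\lambda) \neq 0$ is bounded on $H_2$, and the reproducing kernels satisfy $k^{H_2} _\lambda = \ov{w(\lambda)} \, w \, k^{H_1} _\lambda$; this follows from $\ip{g}{k^{H_2}_\lambda}_{H_2} = \ip{W^{-1} g}{W^{-1} k^{H_2}_\lambda}_{H_1}$ together with $g(\lambda) = w(\lambda) (W^{-1} g)(\lambda)$. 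Since $W$ is isometric it preserves orthogonality and totality, so the image under $W$ of each orthogonal total set of kernels in $K^2 _{\theta '}$ is again an orthogonal total set, consisting (up to nonzero scalars) of point evaluation vectors in $S$. As $\alpha$ ranges over $\bm{T}$ this produces the desired $\bm{T}$-parameter family of total orthogonal sets of point evaluation vectors for $S$.

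The step I expect to require the most care is not the algebra of the transfer but verifying that the objects are genuine: namely that elements of $S = u h K^2 _{\theta '}$ are honest functions on $\bm{R}$ for which point evaluation makes sense, and that the multiplier $w = uh$ does not vanish at the sampling points, so that the transported vectors are \emph{nonzero} point evaluation vectors rather than degenerate. Here one uses that $u$ is unimodular (hence of modulus one almost everywhere) and that the isometric multiplier $h$, being of the Sarason form $a/(1 - b\theta ')$, vanishes only on a negligible set that the sampling sequences can be taken to avoid; this is exactly the content of the cited results \cite{Martin-dB, Martin-symsamp}, which is why the whole statement reduces, as claimed in the introduction, to Corollary \ref{cor:main}.
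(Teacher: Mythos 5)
Your proposal is correct and follows essentially the same route as the paper, whose entire proof is the one-line observation that $S$ is the image of $K^2_{\theta'}$ under an isometric multiplier (via Corollary \ref{cor:main}) and that $K^2_{\theta'}$ already has the stated reproducing-kernel and sampling properties for meromorphic inner $\theta'$ by \cite{Martin-dB, Martin-symsamp}. Your additional care about how kernels transport under the multiplier and about non-vanishing of the multiplier at the sampling points is a reasonable fleshing-out of details the paper leaves implicit, not a different argument.
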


\begin{proof}
    This follows as $S$ is the image of $K^2 _\theta$ under an isometric multiplier and $K^2 _\theta$ has these properties when $\theta$ is inner and meromorphic.

\end{proof}

\section{Outlook}

We have proven that a subspace $S \subset L^2 (\bm{R})$ is nearly invariant with $S = h K^2 _\theta$, and $\theta $ meromorphic and inner, $\theta (i) =0$, if and only if the multiplication operator $M$ has
a restriction $M_S \in \symr{S}$ with meromorphic inner characteristic function $\theta$. We expect a similar result to hold whenever $\theta$ is inner and not necessarily meromorphic, and perhaps an analogous result could be established for arbitrary contractive analytic $\theta$. However to generalize the approach presented here would require generalizing Krein's results of Section \ref{section:reptheory} to the case of more general contractive analytic functions.

\end{document}